\newcommand{\vertiii}[1]
{{\left\vert\kern-0.25ex\left\vert\kern-0.25ex\left\vert #1 \right\vert\kern-0.25ex\right\vert\kern-0.25ex\right\vert}}
\renewcommand{\paragraph}{%
\@startsection {paragraph}{4}
{\z@} \z@ {-\fontdimen 2\font }\bfseries
}
\newif\ifdebug
\numberwithin{equation}{section}
\def\subsek~{\S{}}
\newtheorem{theorem}{Theorem}[section]
\newaliascnt{lemma}{theorem}  
\newaliascnt{proposition}{theorem}
\newtheorem{proposition}[proposition]{Proposition}
\newaliascnt{corollary}{theorem}  
\newtheorem{corollary}[corollary]{Corollary}  
\newaliascnt{example}{theorem}  
\newtheorem{example}[example]{Example}  
\newaliascnt{exercise}{theorem}  
\theoremstyle{remark}
\newaliascnt{remark}{theorem}  
\newtheorem{remark}[remark]{Remark}  
\theoremstyle{definition}
\newaliascnt{definition}{theorem}  
\newtheorem{definition}[definition]{Definition}  
\def\equationautorefname#1#2\null{%
  Eq.#1(#2\null)%
}
\g@addto@macro{\UrlBreaks}{\UrlOrds}
\definecolor{darkred}{rgb}{0.4,0,0}
\definecolor{darkgreen}{rgb}{0,0.5,0}
\definecolor{darkblue}{rgb}{0,0,0.4}
\title[Splitting MHS over Affine Invariant Manifolds]{Splitting Mixed Hodge Structures over Affine Invariant Manifolds}
\thanks{{Revised \today}}
\date{November 2013}
\author{{ Simion Filip}
}
\begin{document}
\begin{abstract}
We prove that affine invariant manifolds in strata of flat surfaces are algebraic varieties.
The result is deduced from a generalization of a theorem of M\"{o}ller.
Namely, we prove that the image of a certain twisted Abel-Jacobi map lands in the torsion of a factor of the Jacobians.
This statement can be viewed as a splitting of certain mixed Hodge structures.
\end{abstract}

\maketitle


\ifdebug
  \listoffixmes
\fi

\section{Introduction}

Let $(X,\omega)$ be a Riemann surface with a holomorphic $1$-form on it.
The set of all such pairs forms an algebraic variety $\cH(\kappa)$ called a stratum, where $\kappa$ encodes the multiplicities of the zeros of $\omega$.
The stratum carries a natural action of the group $\SL_2\bR$ which is of transcendental nature.


A stratum $\cH(\kappa )$ has natural charts to complex affine spaces.
The coordinates are the periods of $\omega$ on $X$, thus in $\bC$.
After identifying $\bC$ with $\bR^2$, the action of $\SL_2\bR$ is the standard one on each coordinate individually.
In particular, the statum carries a natural Lebesgue-class measure which is invariant under $\SL_2\bR$.
The finiteness of the measure was proved by Masur \cite{Masur} and Veech \cite{Veech}.

The action of $\SL_2\bR$ and knowledge of invariant measures can be applied to study other dynamical systems.
For interval exchange transformations this started in the work of Masur and Veech \cite{Masur, Veech}.
A starting point for applications to polygonal billiards was by Kerckhoff-Masur-Smillie in \cite{KMS}.
Some recent applications involve a detailed analysis of the wind-tree model by Hubert-Leli{\`e}vre-Troubetzkoy \cite{windtree}.
For a comprehensive introduction to the subject, see the survey of Zorich \cite{Zorich_survey}.

For more precise applications, especially to concrete examples, one needs to understand all possible invariant measures.
For instance, polygonal billiards correspond to a set of Masur-Veech measure zero.

Recent results of Eskin and Mirzakhani \cite{EM} show that finite ergodic invariant measures are rigid and in particular are of Lebesgue class and supported on smooth manifolds.
In further work with Mohammadi \cite{EMM} they show that many other analogies with the homogeneous setting and Ratner's theorems hold.

The $\SL_2\bR$-invariant measures give rise to \emph{affine invariant manifolds}.
These are complex manifolds which are given in local period coordinates by linear equations.
It was shown by Wright \cite{Wright_field} that the linear equations can be taken with coefficients in a number field.

Note that \emph{finite} $\SL_2\bR$-invariant measures are supported on real codimension $1$ hypersurfaces inside affine manifolds, the issue arising from the action of scaling by $\bR^\times$.
The affine invariant manifolds are then closed $\GL_2\bR$-invariant sets.
In fact, by \cite{EMM} the closure of any $\GL_2\bR$-orbit is an affine manifold.

Period coordinates are transcendental and so apriori affine manifolds, which are given by linear equations, are only complex-analytic submanifolds.
In this paper, we prove the following result (see \autoref{thm:algebraic}).

\begin{theorem}
 Affine invariant manifolds are algebraic subvarieties of the stratum $\cH(\kappa)$, defined over $\conj{\bQ}$.
\end{theorem}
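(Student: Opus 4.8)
The plan is to exhibit $\mathcal{M}$ as a union of irreducible components of an explicitly constructed algebraic subvariety of $\cH(\kappa)$ defined over $\conj{\bQ}$, treating separately the ``absolute'' and ``relative'' parts of the linear equations that cut out $\mathcal{M}$ in period coordinates. Fix a base point $(X_0,\omega_0)\in\mathcal{M}$ with zero set $Z_0$ and $n=\#Z_0$. In local period coordinates $\mathcal{M}$ is an open piece of a linear subspace $T\subseteq H^1(X_0,Z_0;\bC)$, defined over $\conj{\bQ}$ by \cite{Wright_field}. Writing $r\colon H^1(X,Z)\to H^1(X)$ for the natural map, set $V:=r(T)\subseteq H^1(X_0;\bC)$; again by \cite{Wright_field}, $V$ is symplectic, defined over a number field $k\subseteq\conj{\bQ}$, contains the real and imaginary parts of $[\omega_0]$, and underlies a flat subbundle of $R^1\pi_*\bC$ over $\mathcal{M}$.

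For the absolute part I would use semisimplicity of the Hodge bundle over $\mathcal{M}$ (a consequence of the rigidity results of \cite{EM,EMM}), so that the flat splitting $H^1=V\oplus V^{\perp}$ is a splitting of polarized variations of Hodge structure; equivalently, the associated projector is a Hodge class of type $(0,0)$ in every fibre over $\mathcal{M}$. The endomorphism variation of $R^1\pi_*$ on $\cH(\kappa)$ is defined over $\conj{\bQ}$, so the theorem of Cattani--Deligne--Kaplan on algebraicity of the locus of Hodge classes (with coefficients in $k$) shows that the locus in $\cH(\kappa)$ carrying a Hodge class of the same type is a countable union of $\conj{\bQ}$-algebraic subvarieties; pick one, $\mathcal{Z}_1$, containing $\mathcal{M}$, over which $V$ extends to a subvariation. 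Imposing in addition the closed algebraic condition $[\omega]\in V$ yields a $\conj{\bQ}$-algebraic $\mathcal{Z}_2\supseteq\mathcal{M}$ on which all the absolute period relations defining $\mathcal{M}$ hold.

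The relative part is where our twisted Abel--Jacobi torsion theorem (the main result of the paper) enters. The mixed Hodge structure on $H^1(X\setminus Z)$ is an extension of $H^1(X)$ by a Tate piece supported on $Z$, and its extension class, projected along $H^1(X)=V\oplus V^{\perp}$ onto the $V^{\perp}$-summand, is the twisted Abel--Jacobi class of the zeros; it defines a section $\nu$, over a Zariski-open subset of $\mathcal{Z}_2$ and defined over $\conj{\bQ}$, of the isogeny factor $\mathcal{J}^{\perp}\to\mathcal{Z}_2$ of the relative Jacobian corresponding to $V^{\perp}$. The torsion theorem asserts that $\nu(p)$ is torsion for every $p\in\mathcal{M}$; since $\mathcal{M}$ is a connected complex manifold and $\nu$ is holomorphic, Baire's theorem together with the identity theorem produce a single integer $N_0$ with $N_0\,\nu\vert_{\mathcal{M}}\equiv 0$. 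Hence $\mathcal{M}\subseteq\mathcal{Z}_3:=\{\,p\in\mathcal{Z}_2:\ N_0\,\nu(p)=0\,\}$, a single closed subvariety defined over $\conj{\bQ}$.

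To conclude, I would show that $\mathcal{M}$ is a union of irreducible components of $\mathcal{Z}_3$. Since $\mathcal{M}$ is a smooth connected complex submanifold of $\mathcal{Z}_3$, it suffices to check that the Zariski tangent space of $\mathcal{Z}_3$ at $(X_0,\omega_0)$ equals $T$; both inclusions reduce, via the Gauss--Manin connection, to comparing the linearizations of the conditions $r[\omega]\in V$ and $N_0\nu=0$ with Wright's linear equations for $T$. Granting this, $\mathcal{M}$ agrees locally, hence globally by analytic continuation, with a union of components of $\mathcal{Z}_3$, so it is algebraic; and since $\operatorname{Aut}(\bC/\conj{\bQ})$ permutes those components while the one through a $\conj{\bQ}$-rational base point is singled out by having the $\conj{\bQ}$-rational tangent space $T$, the manifold $\mathcal{M}$ is defined over $\conj{\bQ}$. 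Beyond the twisted Abel--Jacobi torsion theorem itself --- whose proof, via a splitting of variations of mixed Hodge structure, is the bulk of the paper --- I expect the two main obstacles in this deduction to be establishing semisimplicity of the Hodge bundle over the a priori merely analytic manifold $\mathcal{M}$, and the final tangent-space comparison, which is genuinely needed since a complex analytic submanifold can a priori be Zariski-dense in a variety of strictly larger dimension.
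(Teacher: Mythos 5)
Your overall strategy matches the paper's: construct an explicitly algebraic variety containing $\cM$ out of the real-multiplication locus plus the torsion condition, and then show $\cM$ is a union of its components. The paper indeed proceeds by defining a nested chain $\cN'\supseteq\cN''\supseteq\cN'''\supseteq\cM$ with exactly these ingredients. There are, however, a few concrete places where your sketch diverges from what the paper does in ways that matter.

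First, your description of the twisted Abel--Jacobi map $\nu$ does not match the paper's. You say the relevant object is the extension class of $H^1(X\setminus Z)$ projected onto the $V^\perp$-summand (with $V=r(T)=H^1_{\iota_0}$). But the paper's $\nu$ (\autoref{def:twisted_cycle}) has range $\Jac_\bZ\bigl(\bigoplus_\iota H^1_\iota\bigr)$ --- the \emph{whole} real-multiplication factor including $H^1_{\iota_0}$, and \emph{excluding} the non-RM summand that sits inside your $V^\perp$. The reason is that the ``twist'' is by the $\cO$-action, which is only available on the RM factor, and the torsion result is obtained by combining the splittings over all Galois conjugates $\iota_l$ via real multiplication. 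Moreover the domain of $\nu$ is the specific $\cO$-lattice $\Lambda_{\iota_0}\subseteq(W_0\cM_{\iota_0})^\perp$, not all of $\check W_0$; omitting this is essential once $W_0\cM\neq 0$. As written your $\nu$ is a different object and the torsion statement of the paper does not directly apply to it.

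Second, and more significantly, you omit the paper's step $\cN''\rightsquigarrow\cN'''$: imposing that the tautological section $\omega$ lie in the flat splitting subbundle $T'\subset p^{-1}(H^1_{\iota_0})/W_0\cM_{\iota_0}$ obtained from the torsion condition (via \autoref{rmk:splittings} and \autoref{thm:iota_l_splitting}). Instead you propose to finish with a Zariski tangent-space comparison on $\cZ_3$, which you yourself flag as one of the ``main obstacles.'' The paper's mechanism for cutting down to $\cM$ is precisely that near a point of $\cM$ the local system $T'$ coincides with $T\cM$, so the condition $\omega\in T'$ is, in period coordinates, exactly the linear condition $\Pi(X,\omega)\in T\cM$ (see \autoref{rmk:loc_syst_per_coord}), and hence cuts out $\cM$ directly --- no tangent-space linearization is needed, and in particular no Gauss--Manin derivative of the Abel--Jacobi section has to be computed. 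Your plan replaces this clean argument with a calculation you haven't carried out and whose outcome is not obvious: the linearization of $N_0\nu=0$ at a point of $\cM$ involves the infinitesimal variation of the extension class in both absolute and relative directions and does not self-evidently reproduce Wright's linear equations for $T$.

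A few smaller points: the semisimplicity of the Hodge bundle over $\cM$ (and the fact that the decomposition of local systems in \cite{Wright_field} is compatible with Hodge structures) is Theorem~7.3 of \cite{ssimple}, not a consequence of \cite{EM,EMM} as you state. Your invocation of Cattani--Deligne--Kaplan for algebraicity of the real-multiplication eigenform locus is a legitimate alternative to what the paper actually does, which is an elementary argument in \autoref{rmk:finiteness_in_A_g} using Borel--Harish-Chandra to get finiteness of the RM locus in $\cA_g$; CDK is more powerful than needed here but would work. Finally, your Baire/identity-theorem argument for a uniform torsion order $N_0$ over $\cM$ is fine, but the paper's proof of \autoref{thm:torsion} already shows $\nu(x)$ is a flat rational section, so the torsion order is constant and the uniform $A$ comes for free from the finitely many generators of $\Lambda_{\iota_0}$.
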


I am grateful to Curtis McMullen for suggesting the next result (see also \cite[Theorem 1.1]{McMullen_classification} for a much more precise result in genus $2$).

\begin{corollary}[see \autoref{rmk:Teich_disk}]
\label{cor:Teich_disk}
Let $\bH\to \cM_g$ be a \Teichmuller disk.
Then its closure (in the standard topology) inside $\cM_g$ is an algebraic subvariety of $\cM_g$.
\end{corollary}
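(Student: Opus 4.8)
The plan is to realise the \Teichmuller disk as the image, under the forgetful map from a stratum to $\cM_g$, of a single $\GL_2\bR$-orbit, to replace that orbit by its closure in the stratum, and then to invoke the Theorem above.

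First I would fix notation. A \Teichmuller disk in $\cM_g$ comes from a pair $(X,\omega)\in\cH(\kappa)$, where $\kappa$ records the orders of the zeros of $\omega$; it is the image of the orbit map $\SL_2\bR\to\cM_g$, $g\mapsto\pi\bigl(g\cdot(X,\omega)\bigr)$, which factors through $\SL_2\bR/\mathrm{SO}(2)\cong\bH$. Here $\pi\colon\cH(\kappa)\to\cM_g$ is the map forgetting the one-form, and I write $D\subseteq\cM_g$ for the image, so that the claim is that the analytic closure $\conj{D}$ is an algebraic subvariety of $\cM_g$. (If the disk comes from a quadratic differential that is not a square, one first passes to the orientation double cover to reduce to the abelian case.) Let $\cN:=\conj{\GL_2\bR\cdot(X,\omega)}$ be the closure of the $\GL_2\bR$-orbit \emph{taken inside the stratum}. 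By the theorem of Eskin--Mirzakhani--Mohammadi \cite{EMM}, $\cN$ is an affine invariant manifold, so by the Theorem above it is an algebraic subvariety of $\cH(\kappa)$, defined over $\conj{\bQ}$.

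Next I would push this algebraic variety forward along $\pi$. The stratum $\cH(\kappa)$ is a quasi-projective variety --- a locally closed subscheme of the total space of the Hodge bundle over $\cM_g$, after passing to coarse spaces or adding level structure to rigidify --- and $\pi$ is a morphism of finite type, so by Chevalley's theorem $\pi(\cN)$ is a constructible subset of $\cM_g$. The homothety subgroup $\bR_{>0}\subseteq\GL_2\bR$ acts by rescaling $\omega$ and fixes the underlying curve, whence $\pi\bigl(\GL_2\bR\cdot(X,\omega)\bigr)=D$; and since $\pi$ is continuous for the analytic topology, $\pi(\cN)=\pi\bigl(\conj{\GL_2\bR\cdot(X,\omega)}\bigr)\subseteq\conj{D}$. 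Therefore $D\subseteq\pi(\cN)\subseteq\conj{D}$, and all three sets have the same analytic closure, namely $\conj{D}=\conj{\pi(\cN)}$.

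To conclude, I would invoke the standard fact that the analytic closure of a constructible subset of an algebraic variety coincides with its Zariski closure: a constructible set contains a nonempty Zariski-open subset of its Zariski closure, and such a subset is dense in the analytic topology as well. Applied to $\pi(\cN)$ this gives $\conj{D}=\conj{\pi(\cN)}=\overline{\pi(\cN)}^{\,\mathrm{Zar}}$, which is an algebraic (indeed $\conj{\bQ}$-defined) subvariety of $\cM_g$, proving the corollary. The only subtlety worth flagging is that $\pi$ is not proper, so $\pi(\cN)$ itself need not be closed in $\cM_g$: in a limiting surface the zeros of $\omega$ may collide, so the limit leaves $\cH(\kappa)$ for a coarser stratum. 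This does no harm, since only the closure of $\pi(\cN)$ is used and that is governed by its Zariski closure; accordingly, all of the real content sits in the Theorem above and in \cite{EMM}, the rest being formal properties of images of morphisms of varieties.
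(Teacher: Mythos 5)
Your proof is correct, and it takes a genuinely different route from the paper's. The paper lifts the disk to the \emph{projectivized} Hodge bundle $\bP\cH_g$, whose projection to $\cM_g$ is \emph{proper}; it then takes the Zariski closure $\bP\overline{\cM}$ of the (projectivized) affine manifold inside $\bP\cH_g$ and exploits properness twice --- once to identify $\conj{f(\bH)}$ with the image of $\conj{\tilde{f}(\bH)}$, and once to conclude that the image of a closed subvariety is closed. You instead work directly with the non-proper forgetful map $\pi\colon\cH(\kappa)\to\cM_g$: you apply Chevalley's theorem to get constructibility of $\pi(\cN)$, sandwich $D\subseteq\pi(\cN)\subseteq\conj{D}$ using the $\bR_{>0}$-equivariance and continuity of $\pi$, and then invoke the analytic-closure-equals-Zariski-closure fact for constructible sets. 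Both proofs rest on EMM orbit closure plus \autoref{thm:algebraic} and on the constructible-set fact, but you trade the paper's geometric properness argument for the more general-purpose Chevalley theorem; the paper's projectivization is essential precisely to make the projection proper, and skipping it is what forces you to reason via constructibility rather than closedness of the image. One small imprecision worth noting: your closing aside attributes the non-properness of $\pi$ to zeros of $\omega$ colliding; that phenomenon concerns $\cH(\kappa)$ failing to be closed in $\cH_g$, whereas $\pi|_{\cH(\kappa)}$ is already non-proper for the more basic reason that its fibers (open subsets of Hodge-bundle fibers, carrying the scaling action) are non-compact. This does not affect the argument, since you correctly never rely on $\pi(\cN)$ being closed.
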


The lowest-dimensional affine manifolds are \Teichmuller curves.
It was proved they are algebraic by Smillie and Weiss \cite[Proposition 8]{Smillie_Weiss};
a different sketch of proof (attributed to Smillie) is in \cite{Veech_closure}.
That they are defined over $\conj{\bQ}$ was proved by McMullen \cite{McMullen_def_Q}.
It was proved by M{\"o}ller in \cite{Moller_VHS} that they are defined over $\conj{\bQ}$ after embedding into a moduli space of abelian varieties.

\Teichmuller curves and higher-dimensional $\SL_2\bR$-invariant loci also have interesting arithmetic properties.
McMullen has related in \cite{McMullen_Hilbert_mod_surF} \Teichmuller curves in genus $2$ with real multiplication.
He also gave further constructions using Prym loci \cite{McMullen_prym}.
In genus $2$ algebraicity follows from a complete classification of invariant loci by McMullen \cite{McMullen_classification}.
In the stratum $\cH(4)$ algebraicity is known by results of Aulicino, Nguyen and Wright \cite{ANW_H^4, NW}.
Lanneau and Nguyen have also done extensive work on Prym loci in genus $3$ and $4$ \cite{LN_periodicity, LN_Prym_g3_G4}.

Techniques from variations of Hodge structures were introduced by M{\"o}ller, starting in \cite{Moller_VHS}.
In particular, he showed that \Teichmuller curves always parametrize surfaces with Jacobians admitting real multiplication on a factor.
He also showed that over a \Teichmuller curve, the Mordell-Weil group of the corresponding factor is finite \cite{Moller_torsion}.
In particular, zeros of the $1$-form are torsion under the Abel-Jacobi map.
See also \cite{Moller_Linear_mnfds} for further results.

The results in \cite{ssimple} show that on affine manifolds, the topological decomposition of cohomology (e.g. the local systems from \cite{Wright_field}) are compatible with the Hodge structures.
As a consequence, affine manifolds also parametrize Riemann surfaces with non-trivial endormophisms, typically real multiplication on a factor (see \cite[Thm. 7.3]{ssimple}).

This paper extends M{\"o}ller's torsion result to affine manifolds.
The precise statement and definitions are in \autoref{subsec:combinig_splittings} and \autoref{thm:torsion}.

\begin{theorem}
 Let $\cM$ be an affine invariant manifold, parametrizing Riemann surfaces with real multiplication by the order $\cO$ on a factor of the Jacobians.
 Then $\cM$ carries a natural local system $\Lambda$ of $\cO$-linear combinations of the zeros of the $1$-form (see \autoref{eqn:Lambda_iota_def}) and a twisted Abel-Jacobi map (see \autoref{def:twisted_cycle})
 $$
 \nu : \Lambda \to \Jac_\bZ\left(\oplus_\iota H^1_\iota\right)
 $$
 The range is the factor of Jacobians admitting real multiplication (see \autoref{eqn:H1_decomp}).
 
 Then the image of $\nu$ always lies in the torsion of the abelian varieties.
\end{theorem}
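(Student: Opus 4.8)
The plan is to reinterpret $\nu$ as a family of normal functions and to reduce torsion to a flatness statement for the Gauss-Manin connection. Fix a local section $\lambda$ of $\Lambda$. Since $\lambda$ is an $\cO$-linear combination of the zeros of $\omega$ of total degree zero, the Abel-Jacobi construction assigns to it, over each point of $\cM$, a point $\nu(\lambda)$ of the polarized abelian variety $\Jac_\bZ\bigl(\bigoplus_\iota H^1_\iota\bigr)$ depending holomorphically on the moduli; equivalently, $\nu(\lambda)$ is a holomorphic section of the abelian scheme $\mathcal A := \Jac_\bZ\bigl(\bigoplus_\iota H^1_\iota\bigr)\to\cM$, that is, a normal function. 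It classifies an extension of variations of mixed Hodge structure over $\cM$,
\begin{equation*}
 0 \longrightarrow \bigoplus_\iota H^1_\iota \longrightarrow \mathcal E_\lambda \longrightarrow \bZ \longrightarrow 0
\end{equation*}
(Tate twisted so the left-hand term has weight $-1$), and ``$\nu(\lambda)$ is torsion'' is equivalent to ``$\mathcal E_\lambda$ splits as a variation of mixed Hodge structure after a finite cover of $\cM$'', i.e.\ after replacing $\lambda$ by a positive integer multiple.

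The core of the argument is to show that $\nu(\lambda)$ is \emph{flat}, i.e.\ that its infinitesimal invariant vanishes, equivalently that the Gauss-Manin connection on $\mathcal E_\lambda$ preserves a (not necessarily Hodge-theoretic) splitting. This is where the affine-linear structure of $\cM$ enters decisively, via the compatibility of the Hodge filtration with the topological decomposition of cohomology established in \cite{Wright_field} and \cite{ssimple}. The structural input I would invoke is that, in period coordinates, moving along $\cM$ deforms the Hodge filtration of $\bigoplus_\iota H^1_\iota$ essentially only in the $\omega$-direction inside the distinguished summand $H^1_{\iota_0}$ that contains $[\omega]$, so that the Higgs field / second fundamental form governing the other summands $H^1_\iota$ ($\iota\neq\iota_0$) is correspondingly degenerate along $\cM$. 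Differentiating the Abel-Jacobi periods (the integrals of holomorphic $1$-forms against $\lambda$) in period coordinates, one sees that the first-order motion of the zeros is itself governed by $\omega$ -- so it contributes nothing against $\omega$, which vanishes there -- and one is left with a contribution built from the flat cohomology classes of the zeros (covariantly constant, since the zeros are a flat family of marked points) together with the Higgs field applied to $[\omega]$. A direct computation should show this assembles to zero in the quotient $\bigl(\bigoplus_\iota H^{0,1}_\iota\bigr)\otimes\Omega^1_\cM$ defining the infinitesimal invariant; hence $\nu(\lambda)$ is flat, and after a finite cover lifts to a flat section of the real torus bundle $\bigl(\bigoplus_\iota H^1_\iota\bigr)_\bR\big/\bigl(\bigoplus_\iota H^1_\iota\bigr)_\bZ$.

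Finally I would upgrade flatness to torsion. A flat section of $\mathcal A$ is, after a finite cover, a monodromy-invariant vector $v$ of that real torus bundle; lifting it to $\widetilde v$ on the universal cover, the extra fact that the normal function is holomorphic forces -- on comparing the holomorphic and anti-holomorphic parts of $\widetilde v$ against the varying Hodge decomposition -- that $\widetilde v$ lie in the sum of a subspace common to all the Hodge filtrations and its complex conjugate, i.e.\ that it span part of a \emph{constant} sub-variation of $\bigoplus_\iota H^1_\iota$. It then remains to rule out a non-torsion such $v$: by the classification of the endomorphism-compatible decomposition in \cite{ssimple}, the real-multiplication factor $\bigoplus_\iota H^1_\iota$ has no constant sub-variation supporting $[\omega]$, and a rigidity argument -- combining Deligne's theorem of the fixed part for the polarized variation over a finite cover of $\cM$ with the ergodicity of the $\GL_2\bR$-action from \cite{EM, EMM} -- forces any residual flat section to be rational, hence torsion. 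For the lowest-dimensional affine manifolds this recovers M\"oller's theorem \cite{Moller_torsion}.

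The step I expect to be the main obstacle is the vanishing of the infinitesimal invariant in the second paragraph: it rests on the precise way in which the variation of Hodge structure is constrained by the linear equations defining $\cM$ -- morally, that along $\cM$ the only modulus of the Hodge structure is $\omega$ itself -- and on correctly bookkeeping the contribution of the moving zeros to the Abel-Jacobi differential. The flat-implies-torsion endgame is comparatively soft, but still needs that the relevant factor of the Jacobians carries no isotrivial piece supporting $\omega$, which is again the point at which \cite{ssimple} is used.
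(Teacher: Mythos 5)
Your framing (normal functions, extension classes of VMHS, ``torsion $=$ splits after a finite cover'') and your endgame (flat $+$ irreducible monodromy $\Rightarrow$ rational $\Rightarrow$ torsion) both match the spirit of the paper. But the core step---producing the flatness/splitting---is where your proposal diverges from the paper and, as sketched, would not go through. You propose a direct computation of the infinitesimal invariant, justified by the heuristic that ``along $\cM$ the Hodge filtration of $\bigoplus_\iota H^1_\iota$ deforms essentially only in the $\omega$-direction inside $H^1_{\iota_0}$'' and that the Higgs field is ``correspondingly degenerate'' on the Galois-conjugate summands. That is false: the summands $H^1_{\iota_l}$ for $\iota_l\neq\iota_0$ carry nontrivial variations of Hodge structure over $\cM$ (their Lyapunov exponents and second fundamental forms are nonzero), and the forms $\omega_l\in H^{1,0}_{\iota_l}$ do not vanish at the zeros of $\omega$, so the ``zero residue'' cancellation you invoke is available only in the $\iota_0$-factor and gives no information about the others. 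There is no direct formula making the infinitesimal invariant visibly vanish on $H^1_{\iota_l}$.

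What the paper actually does, and what your proposal is missing, is the following. The key new input is that $T\cM$ is a flat $\iota_0(k)$-local system inside relative cohomology (Wright), so it has Galois conjugates $T\cM_{\iota_l}$; each $T\cM_{\iota_l}/W_0\cM_{\iota_l}$ furnishes a \emph{flat} splitting $\sigma_{\iota_l}$ of the dualized extension over $H^1_{\iota_l}$. Comparing $\sigma_{\iota_l}$ to the $\bR$-splitting $\sigma_\bR$ from the Hodge decomposition gives a holomorphic section $\psi_e$ of $\check H^{-1,0}_{\iota_l}$, a bundle of negative curvature. The splitting theorem is then proved dynamically: one builds a dual modified mixed Hodge norm so that the Kontsevich--Zorich cocycle is bounded/integrable, invokes Oseledets to show $\psi_e$ has subexponentially growing Hodge norm, and applies the subharmonicity/constancy lemma from \cite{ssimple} together with the negative curvature and irreducibility of $H^1_{\iota_l}$ to conclude $\psi_e\equiv 0$. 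Nothing in this step is a pointwise ``direct computation''; it is an ergodic-theoretic argument. Your proposal also glosses over the restriction to the sublocal system $\Lambda_{\iota_0}\subset\check W_0$---only $\cO$-linear combinations annihilating $W_0\cM_{\iota_0}$ produce a well-posed extension class in each quotient, and one must Galois-twist the coefficients $c^j\mapsto\iota_l(c^j)$ consistently with the Galois-conjugate tangent spaces before summing over embeddings to reach the $\Jac_K$-level vanishing. Without these two ingredients (the conjugate flat splittings, and the dynamical constancy argument), the flatness claim is unsupported.
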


\begin{remark}
\leavevmode
\begin{itemize}
 \item[(i)] The expression ``real multiplication by $\cO$'' is used in a rather loose sense.
 It means that the ring $\cO$ maps to the endomorphisms of a factor of the Jacobian.
 The factor of the Jacobian is always nontrivial, as it contains at least the part coming from the $1$-form $\omega$.
 The ring $\cO$ could be $\bZ$, and the factor could be the entire Jacobian.
 
 \item[(ii)] The local system $\Lambda$ can be trivialized on a finite cover of the stratum, and is defined as follows.
 The tangent space to the stratum contains the relative cohomology classes that vanish on absolute homology, denoted $W_0$.
 The tangent space $T\cM$ of the affine manifold intersects it in a sublocal system (over $\cM$), denoted $W_0\cM$.
 The dual of $W_0$, denoted $\check{W_0}$, is canonically identified with linear combinations of the zeros of the $1$-form with zero total weight.
 Then $\Lambda$ is an $\cO$-submodule of $(W_0\cM)^\perp\subset \check{W_0}$, i.e. of the annihilator of $W_0\cM$.
 
 \noindent Indeed, by results of Wright \cite{Wright_field}, $W_0\cM$ and thus $(W_0\cM)^\perp$ are defined over $k$ - the field giving real multiplication.
 Since $\check{W}_0$ carries a $\bZ$-structure, extending scalars to $\cO$, define $\Lambda:=\check{W}_0(\cO)\cap (W_0\cM)^\perp(k)$, where $A(R)$ denotes the $R$-points of $A$.
 \item[(iii)]
 When $W_0\cM$ is empty, e.g. for \Teichmuller curves, $\Lambda$ coincides with $\check{W}_0(\cO)$.
 In particular, it contains (up to finite index) all the $\bZ$-linear combinations of zeroes of the $1$-form, with total weight zero; on them $\nu$ is the usual Abel-Jacobi map.
 The extension of $\nu$ to $\cO$-linear combinations of zeros uses the $\cO$-action on the Jacobian factor.
\end{itemize}
\end{remark}

\begin{remark}
 The result on torsion can be described concretely using periods of $1$-forms.
 Note that it refers, in particular, to $1$-forms other than $\omega$; describing them using the flat structure does not seem immediate.
 
 Let $\{r_j\}_j$ be formal integral combinations of the zeroes of $\omega$, such that the coefficients of each $r_j$ add up to $0$.
 They can be lifted to actual relative cycles on the surface, denoted $r_j'$ (these are now actual curves that connect zeroes of $\omega$).
 Let also $\{a_i,b_i\}_{i=1}^g$ denote an integral basis of the first homology of the surface.
 
 Suppose $\sum_j c^j r_j$ is an element of $\Lambda$, where $c_j$ are elements of $\cO$. 
 The condition that its image is torsion under the twisted cycle map $\nu$ is equivalent to the following:
 There exist $\alpha_i,\beta_i\in \bQ$ such that whenever $\omega_l\in H^{1,0}_{\iota_l}$ is a holomorphic $1$-form, we have
 \begin{align}
 \sum_j \iota_l (c^j)\int_{r_j'}\omega_l = \sum_i \left(\alpha_i\int_{a_i} \omega_l + \beta_i \int_{b_i}\omega_l\right)
 \end{align}
 In other words, the absolute and relative periods of $\omega_l$ satisfy some linear relations.
 The coefficients $\iota_l(c^j)$ vary with the embedding $\iota_l$ corresponding to the subspace in which $\omega_l$ lives.
\end{remark}

\begin{remark}
\label{rmk:Teich_disk}
 The algebraicity result also applies to strata of quadratic differentials.
 Indeed, these embed via the double-covering construction to strata of holomorphic $1$-forms.
 An affine invariant submanifold of a stratum of quadratic differentials can thus be viewed as one in a stratum of $1$-forms.
 In particular, for \autoref{cor:Teich_disk}, it suffices to restrict to \Teichmuller disks coming from holomorphic $1$-forms.
 
 Over the moduli space of curves $\cM_g$ we have the Hodge bundle $\cH_g$ whose fibers are the holomorphic $1$-forms.
 We can consider the projectivization of $\cH_g$ and therefore the proper projection $\bP\cH_g\to \cM_g$.
 The stratum $\cH(\kappa)$ is a subvariety of $\cH_g$, and we can also quotient by the $\bC^\times$-action to obtain $\bP\cH(\kappa)\subseteq \bP\cH_g$.
 
 Now a \Teichmuller disk $f:\bH\to \cM_g$ as in \autoref{cor:Teich_disk} lifts to $\tilde{f}:\bH \to \bP\cH(\kappa)\subseteq \bP\cH_g$.
 Combining Theorem 2.1 from \cite{EMM} and \autoref{thm:algebraic} we find that the (Zariski and usual) closure of $\tilde{f}(\bH)$ in $\cH(\kappa)$ is an algebraic variety $\bP\cM$.
 We can further take its (Zariski and usual) closure inside $\bP\cH_g$ to find that it is also an algebraic variety $\bP\overline{\cM}\subseteq \bP\cH_g$.
 Note that the Zariski and usual closure of a quasi-projective set coincide.
 
 Now, the topological closure of $f(\bH)$ will agree with the projection of the topological closure of $\tilde{f}(\bH)$ inside $\bP\cH_g$, which is $\bP\overline{\cM}$.
 This follows from the properness of $\bP\cH_g\to \cM_g$.
 Properness also ensures that the projection of an algebraic variety is still a variety, so \autoref{cor:Teich_disk} follows.
\end{remark}

\paragraph{Outline of the paper.}
\autoref{sec:alg_special_case} proves in a special case that affine invariant manifolds are algebraic.
This special case occurs when the tangent space of the affine manifold contains all relative cohomology classes.
The proof only uses results from \cite{ssimple}.

\autoref{sec:MHS} contains basic definitions and constructions about mixed Hodge structures.
We only describe the small part of the theory that is necessary for our arguments.
The proofs in later sections use this formalism, and we also include some concrete examples throughout.
One does not need to be acquainted with the full theory to follow the arguments.

\autoref{sec:splittings} contains the main technical part.
It proves that certain sequences of mixed Hodge structures are \emph{split}, i.e. as simple as possible.
This uses the negative curvature properties of Hodge bundles.

\autoref{sec:algebraicity_torsion} combines the previous results to deduce the Torsion \autoref{thm:torsion}.
This is then used to prove the Algebraicity \autoref{thm:algebraic}.

\begin{remark}[\textbf{Self-intersections}]
\label{rmk:self_inters}
Affine manifolds are only immersed in a stratum (see \cite[Def.~1.1]{EM}), and could have locally finitely many self-intersecting sheets.
Thus, any affine manifold $\cM$ can be written as the union $\cM=\cM_0\coprod \cM'$ where $\cM_0$ is a smooth open subset of $\cM$ and $\cM'$ is a lower-dimensional proper closed $\GL_2^+(\bR)$-invariant affine manifold (possibly disconnected or with self-intersections).
Moreover the topological closure of $\cM_0$ contains $\cM'$.
Since $\dim_\bC\cM' < \dim_\bC \cM$, it follows that $\cM_0$ is connected if $\cM$ is.

The arguments in \autoref{sec:alg_special_case} and \autoref{sec:algebraicity_torsion} about algebraicity apply locally on $\cM_0$ and identify it (locally) with a quasi-projective variety.
Thus, assuming by induction that $\cM'$ is quasi-projective, they show that $\cM_0$ is quasi-projective inside $\cH\setminus \cM'$.
Again, since $\cM'$ is quasi-projective and contained in the (topological) closure of $\cM_0$, it follows that $\cM$ is quasi-projective inside $\cH$. 
\end{remark}

\paragraph{Orbifolds.}
All the arguments are made in some finite cover of a stratum, to avoid orbifold issues.
In particular, period coordinates are well-defined and the zeros of the $1$-form are labeled.
The results are invariant under passing to finite covers.

\paragraph{Acknowledgments.}
I would like to thank my advisor Alex Eskin, who was very helpful and supportive at various stages of this work, and in particular about the paper \cite{ssimple} whose methods are used here.
I have also benefited a lot from conversations with Madhav Nori, especially on the topic of mixed Hodge structures.

I also had several conversations on the topic of algebraicity with Alex Eskin and Alex Wright.
In particular, Alex Wright explained his (unpublished) result that the torsion and real multiplication theorems of M\"{o}ller characterize \Teichmuller curves and suggested that finding and proving some generalization of the torsion theorem could imply algebraicity.
I have also discussed and received very useful feedback on an earlier draft of this paper from both Alex Eskin and Alex Wright.
I am very grateful for their feedback and the numerous insights they shared with me.

I am also grateful to Curtis McMullen for suggesting \autoref{cor:Teich_disk}.
\section{Algebraicity in a particular situation}
\label{sec:alg_special_case}

In this section, we prove a special case of algebraicity.
It only requires results of \cite{ssimple}.
In this special case the location of the tangent space to an affine manifold can be described precisely.

\paragraph{Setup.} 
Consider an affine invariant manifold $\cM$ in some stratum $\cH(\kappa)$.
We omit $\kappa$ from the notation, and refer to the stratum as $\cH$.
Let $T\cH$ be the tangent bundle of the stratum, and let $W_0\subset T\cH$ be the subbundle corresponding to the purely relative cohomology classes.
The survey of Zorich \cite[Section 3]{Zorich_survey} provides a clear and detailed exposition of these objects.

The purpose of this section is to prove the following result.

\begin{proposition}
\label{prop:alg_special_case}
 Suppose that everywhere on $\cM$ we have $W_0\subset T\cM$, where $T\cM$ denotes the tangent bundle of $\cM$.
 
 Then $\cM$ is a quasi-projective algebraic subvariety of $\cH$.
\end{proposition}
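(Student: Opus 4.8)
The plan is to reconstruct $\cM$, in period coordinates around each of its points, as a quasi-projective algebraic subvariety of the stratum; since $\cM$ is connected this will identify it with a single analytic component of that subvariety, hence make it quasi-projective. The point of the hypothesis $W_0\subset T\cM$ is that it lets one describe $\cM$ using only the absolute periods of $\omega$, and absolute periods are governed by algebraic geometry through Hodge theory. Concretely, write $p\colon T\cH\cong H^1(X,Z;\bC)\to H^1(X;\bC)$ for the projection onto absolute cohomology, so that $W_0=\ker p$, and set $H^1_\cM:=p(T\cM)$, a flat subbundle of $H^1$ over $\cM$, defined over a number field by \cite{Wright_field}. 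The hypothesis says $T\cM=p^{-1}(H^1_\cM)$; since moreover $\cM$ is invariant under the scaling action of $\bC^\times\subset\GL_2^+(\bR)$ it is a cone, so the linear equations of Wright cutting it out in a period chart are homogeneous and, by the hypothesis, involve only the absolute periods. Thus in a small period chart $U$ around a point of $\cM$ one has, exactly, $\cM\cap U=\{z\in U:\ p(z)\in H^1_\cM(X)\}$.

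Next I would bring in the Hodge theory. By \cite{ssimple} the topological decomposition of $H^1$ over $\cM$ is compatible with the Hodge structures; in particular, by \cite[Thm.~7.3]{ssimple}, there is a polarized $\bQ$-sub-variation of Hodge structure $\bV\subseteq H^1(\,\cdot\,;\bQ)$ over $\cM$ carrying real multiplication by $\cO$, and $H^1_\cM$ is the isotypic component of $\bV$ containing the class of $\omega$. Working in a finite cover of the stratum (which is harmless), let $\mathcal{Z}\subseteq\cH(\kappa)$ be the locus over which the Gauss--Manin variation $H^1$ admits such a sub-VHS-with-$\cO$-action. This is a Hodge locus, hence by the algebraicity theorem of Cattani--Deligne--Kaplan it is a quasi-projective algebraic subvariety of the stratum, over which $\bV$, the $\cO$-action, and hence $H^1_\cM$ extend as algebraic flat subbundles of $H^1$. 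Since the class of $\omega$ lies automatically in $F^1H^1$, the incidence condition ``$[\omega]\in H^1_\cM$'' then cuts out a further quasi-projective subvariety $\widetilde\cM\subseteq\mathcal{Z}$, and $\cM\subseteq\widetilde\cM$ by construction.

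It then remains to check that $\cM$ and $\widetilde\cM$ coincide near every point of $\cM$. Over a simply connected period chart $U$ around such a point the extended subbundle $H^1_\cM$ is flat, so on the component of $\mathcal{Z}\cap U$ through that point it is a constant subspace $V\subseteq H^1(X;\bC)$; there the incidence condition $[\omega(z)]\in H^1_\cM$ becomes $p(z)\in V$ (using that $[\omega(z)]$ automatically lies in $F^1$), so $\widetilde\cM$ coincides near the point with $\{z\in U:\ p(z)\in V\}$, which by the first paragraph is exactly $\cM\cap U$. Hence $\cM$ is open in $\widetilde\cM$; it is also closed in $\cH(\kappa)$ and therefore in $\widetilde\cM$, so $\cM$ is a union of analytic connected components of the quasi-projective variety $\widetilde\cM$, and being connected it is a single one — in particular a quasi-projective algebraic subvariety of $\cH(\kappa)$. (As explained in \autoref{rmk:self_inters}, one applies this to the smooth part $\cM_0$ and induces on dimension to deal with self-intersections.)

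The main obstacle is the second step: producing, out of the a priori transcendental, purely topological datum $T\cM$ — equivalently its absolute image $H^1_\cM$ together with its $\cO$-action — an honest sub-variation of Hodge structure, so that the locus where this structure appears is an algebraic Hodge locus. This is precisely the input that \cite{ssimple} supplies (and where the negative-curvature estimates for Hodge bundles enter); granting it, the remainder is routine manipulation in period coordinates together with the standard algebraicity of Hodge loci.
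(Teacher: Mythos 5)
Your argument is correct in outline and arrives at the result, but it substitutes a genuinely different ingredient for the algebraicity of the ``real multiplication with eigenform'' locus. The paper does \emph{not} invoke Cattani--Deligne--Kaplan. Instead, in \autoref{rmk:finiteness_in_A_g} it gives a more elementary, self-contained argument: the locus in $\cA_g$ of abelian varieties with a factor having the prescribed $\cO$-module structure is a finite union of totally geodesic (Shimura-type) subvarieties, algebraic because each is the image of an embedded period subdomain and there are only finitely many such embeddings up to $\Sp(H^1)(\bZ)$ by the Borel--Harish-Chandra finiteness theorem; the eigenform locus is then a $\bP^{n}$-bundle over this, and $\cN'$ is its preimage under the period map. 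Your use of CDK is applicable here (the $\cO$-action is given by Hodge classes of type $(0,0)$ in $\End(H^1)$ with bounded self-intersection, so the locus $\mathcal Z$ is a Hodge locus, the stratum is quasi-projective, and the VHS is polarizable as the paper notes in \autoref{rmk:field_defn}), and it buys you generality and conceptual economy — no need to unwind the period-domain picture — at the cost of relying on a considerably deeper theorem. One small point you should spell out rather than assert: that $\bV$, the $\cO$-action, and hence $H^1_\cM$ extend as \emph{algebraic} flat subbundles over $\mathcal Z$ is not just the statement that $\mathcal Z$ is algebraic, but uses the full form of CDK (the total space of Hodge classes of bounded norm is an algebraic variety proper over the base), from which the extension of the projector onto $\bV$ as an algebraic section follows. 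The remainder of your proof — the local identification $\cM\cap U=\{z: p(z)\in H^1_\cM\}$ via the hypothesis $W_0\subset T\cM$, the incidence condition $\omega\in H^1_\cM$ cutting out $\widetilde\cM$, and the open-and-closed argument identifying $\cM$ with a component — matches the paper's proof step for step, including the deferral of self-intersections to \autoref{rmk:self_inters}.
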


Before proceeding to the proof, we recall some facts about the local structure on a stratum $\cH$.
In particular, we discuss the way in which the relative cohomology groups $H^1(X,(\omega)_{red};\bC)$ provide local coordinates.
These arise from the Gauss-Manin connection and the tautological section, which assigns to $(X,\omega)$ the cohomology class of $\omega$.

\paragraph{Some preliminaries.}
 As explained in \autoref{rmk:self_inters}, it suffices to argue locally in the open part of $\cM$ where there are no self-intersections.
 
 Focus on a small neighborhood in $\cH$ of some $(X_0,\omega_0)\in\cM$, denoted $N_\epsilon(X_0,\omega_0)$.
 Introduce the identification coming from parallel transport (i.e. the Gauss-Manin connection) on relative cohomology
 $$
 GM_{(X,\omega)}:H^1(X,(\omega)_{red};\bC)\toisom H^1(X_0,(\omega_0)_{red};\bC)
 $$
 This is defined for all $(X,\omega)\in N_\epsilon(X_0,\omega_0)$ and $(\omega)_{red}$ denotes the zeroes of $\omega$, forgetting the multiplicities (i.e. the reduced divisor).
 
 Recall that for $(X,\omega)$ we have the natural element $\omega\in H^1(X,(\omega)_{red};\bC)$ viewing $\omega$ as a relative cohomology class.
 Call this the \emph{tautological section} $\omega:\cH\to T\cH$.
 
 Period coordinates are then described by the tautological section:
 \begin{align}  
  \Pi:N_\epsilon(X_0,\omega_0)&\to H^1(X_0,(\omega_0)_{red};\bC) \label{eqn:per_coord}\\  
  (X,\omega) &\mapsto GM_{(X,\omega)}\omega\notag
 \end{align}
 Recall that we have a short exact sequence of vector bundles over $\cH$
 $$
 0\to W_0\to T\cH \overset{p}{\to}H^1\to 0
 $$
 Here $H^1$ is the bundle of absolute first cohomology of the underlying family of Riemann surfaces, and $W_0$ is the purely relative part as before.
 
 It is proved by Wright in \cite[Theorem 1.5]{Wright_field} that over $\cM$ the local system of cohomology decomposes as
 \begin{align}
 \label{eqn:H1_decomp}
 H^1=\left(\bigoplus_{\iota} H^1_{\iota}\right)\oplus V
 \end{align}
 The summation is over embeddings $\iota$ of a number field $k$ in $\bC$, and there is a distinguished real embedding $\iota_0$.
 We then have $p(T\cM)=H^1_{\iota_0}$.

\begin{proof}[Proof of \autoref{prop:alg_special_case}]
 In period coordinates in the neighborhood $N_\epsilon(X_0,\omega_0)$ the property $p(T\cM)=H^1_{\iota_0}$ translates to the statement
 \begin{align}
 \label{eqn:loc_def_M}
 \Pi(\cM\cap N_\epsilon) \subseteq p^{-1}(H^1_{\iota_0})_{(X_0,\omega_0)}\subseteq T\cH_{(X_0,\omega_0)}
 \end{align}
 Because $T\cM$ contains all the purely relative cohomology classes, locally $\cM$ coincides with an open subset of the middle space above.
 
 An equivalent way to state the above local description of $\cM$ is to say
 \begin{align}
 \label{eqn:cM_subset_eigenform_locus}
 \cM\cap N_\epsilon =\lbrace{(X,\omega)\; \vert \;p(GM_{(X,\omega)}\omega)\in H^1_{\iota_0} (X_0,\omega_0)\rbrace}
 \end{align}
 So over $\cM$ the section $p(\omega)$ must belong to the local system $H^1_{\iota_0}$.
 
 Note the local system $H^1_{\iota_0}$ cannot be globally defined on $\cH$.
 However, in the neighborhood $N_\epsilon$ one can still define it using the Gauss-Manin connection.
 
\noindent\textbf{Checking algebraicity.}
 We also know by \cite[Thm. 1.6, Thm. 7.3]{ssimple} that each $H^1_\iota$ carries a Hodge structure.
 Moreover, for each $a\in k$ we have the operator
 \begin{align}
 \rho(a)=\left(\bigoplus_\iota \iota(a)\right)\oplus 0
 \end{align}
 which acts by the corresponding scalar on each factor of the decomposition \eqref{eqn:H1_decomp}.
 These operators give real multiplication on the Jacobians, with $\omega$ as an eigenform.

 As a consequence, there is an order $\cO\subset k$ which acts by genuine endomorphisms of the Jacobian factor obtained via \eqref{eqn:H1_decomp}.
 Moreover, the isomorphism class of the corresponding $\bZ$-lattice, viewed as an $\cO$-module, is constant on $\cM$.
 Recall that we are working on the open subset of $\cM$ where there are no self-intersections, and this is still connected if $\cM$ is (see \autoref{rmk:self_inters}).
 
 Define $\cN'$ to be the locus in $\cH$ of $(X,\omega)$ which admit real multiplication by $\cO$ on a factor of the Jacobian, with the same $\cO$-module structure on the integral lattice of the factor, and with $\omega$ as an eigenform.
 This is a finite union of algebraic subvarieties of $\cH$, since it is the preimage of such a collection under the period map to $\bP(H^{1,0})$, which is the projectivization of the Hodge bundle over $\cA_g$.
 See \autoref{rmk:finiteness_in_A_g} for an explanation. 
 
 The discussion above, in particular \autoref{eqn:cM_subset_eigenform_locus}, gives that $\cM\subseteq \cN'$. 
 In the neighborhood $N_\epsilon$ of $(X_0,\omega_0)$ let $\cN$ be one of the irreducible components of $\cN'$ which contains $\cM$.
 We will check this component is unique, and coincides with $\cM$.
 
 Recall the defining conditions of $\cN'$ in $N_\epsilon$:
 \begin{align*}
 N_\epsilon\cap \cN' = \{(X,\omega)\vert \forall a \in k &\textrm{ we have } \rho(a) \textrm{ is of Hodge type }(0,0)\\
 &\textrm{ and } \rho(a)\omega = \iota_0(a)\omega\}
 \end{align*}
 The local systems $H^1_\iota$ are defined on $\cN$, but are extended to $N_\epsilon$ using the flat connection.
 They serve as ``eigen-systems'' for the action of $\rho(a)$, which itself is defined in $N_\epsilon$ using the flat connection.
 We then have the containment
 $$
 N_\epsilon \cap \cN' \subseteq \{(X,\omega)\; \vert\; \forall a\in k \textrm{ we have }\rho(a)\omega = \iota_0(a)\omega\}
 $$
 However, we also have the (local) equality which follows from the identifications via parallel transport
 $$
 \{(X,\omega)\vert \forall a\in k, \rho(a)\omega=\iota_0(a)\omega\} = \{(X,\omega)\vert p\left(\Pi(X,\omega)\right)\in H^1_{\iota_0, (X_0,\omega_0)} \}
 $$
 Looking back at the local definition of $\cM$ given by the inclusions \eqref{eqn:loc_def_M}, we see that this locus is exactly $\cM$.
 So we found that locally near $(X_0,\omega_0)$ we have
 $$
 \cM\subseteq \cN\subseteq \cM
 $$
 This finishes the proof that $\cM$ is an algebraic subvariety of $\cH$.
\end{proof}

\begin{remark}
\label{rmk:loc_syst_per_coord}
In local period coordinates, requiring the section $\omega:\cH\to T\cH$ to be in some local system is the same as restricting to a (local) affine manifold.
Algebraicity in the above theorem followed because we could identify the tangent space to $\cM$ with $p^{-1}(H^1_{\iota_0})$.
In general, we need to know the precise location of the tangent space in relative cohomology.
The next few sections deal with this question.
\end{remark}

\begin{remark}
 \label{rmk:finiteness_in_A_g}
 We now discuss the finiteness of the components of the eigenform locus for real multiplication.
 Assume the type of real multiplication is fixed, in other words the order $\cO$ and the isomorphism of $\cO$-lattice with symplectic form corresponding to the factor of the Jacobian.
 
 To prove finiteness of the eigenform locus, it suffices to prove finiteness of the real multiplication locus in $\cA_g$.
 Indeed, the eigenform locus is a projective space bundle over the real multiplication locus.
 
 Finiteness of the real multiplication locus will follow from the following general theorem of Borel and Harish-Chandra \cite{BHC}.
 Let $\Gamma$ be an arithmetic lattice in a $\bQ$-algebraic group $G$.
 Consider some representation $V$ of $G$, with a $\bZ$-structure compatible with $\Gamma$, and an integral vector $v\in V(\bZ)$ such that $G(\bR)\cdot v\subset V(\bR)$ is closed (equivalently, the stabilizer of $v$ is reductive).
 Then the set of integral points in the orbit $G(\bR)\cdot v$ form \emph{finitely many} classes under the action of $\Gamma$.
 
 Consider the decomposition of absolute cohomology given in \autoref{eqn:H1_decomp}, and abbreviate it as $H^1=M\oplus V$ where $M$ is the factor with real multiplication.
 Each factor contains a lattice, denoted $M(\bZ)$ and $V(\bZ)$ respectively.
 
 Now consider the abstract $\cO\oplus\bZ$-module $M\oplus V$, equipped with the compatible symplectic form (the $\bZ$-factor in $\cO\oplus\bZ$ acts on $V$ only).
 Associated to it is the period domain $\bH_{M,V}$ parametrizing pairs of abelian varieties $(A_M,A_V)$, with markings $M \xrightarrow{\sim} H^1(A_M)$ and $V\xrightarrow{\sim} H^1(A_V)$.
 Moreover, the markings should respect the symplectic forms and $\cO$ should act by endormorphisms of $A_M$.
 
 Consider possible embeddings $\phi: M(\bZ)\oplus V(\bZ) \into H^1_\bZ$, respecting the symplectic form.
 Choosing a basis of $M(\bZ)$ and $V(\bZ)$, such $\phi$ are determined by a collection of vectors in $H^1_\bZ$ subject to constraints coming from the symplectic pairing between the vectors.
 Equivalently, this is determined by a single vector in the direct sum of several $H^1_\bZ$'s, with the appropriate constraints.
 
 The set of real vectors subject to the same constraints forms a single orbit under $\Sp(H^1)(\bR)$.
 Therefore, by the Borel--Harish-Chandra theorem there are finitely many possible $\phi$ up to the action of $\Sp(H^1)(\bZ)$ on the target.
 
 Finally, each $\phi$ determines an embedding $I_\phi$ of the period domain $\bH_{M,V}$ into the Siegel space corresponding to $H^1$.
 The stabilizer of the image of $I_\phi$ acts with co-finite volume on this image.
 Since there are only finitely many $\Sp(H^1)(\bZ)$-equivalence classes of $\phi$'s, there are finitely many corresponding subvarieties in $\cA_g$.
 These subvarieties of $\cA_g$ parametrize abelian varieties with real multiplication by $\cO$ on a factor and $\cO$-module structure as the one corresponding to the affine manifold $\cM$.
\end{remark}

\section{Mixed Hodge structures and their splittings}
\label{sec:MHS}

This section contains background on mixed Hodge structures and their properties.
The monograph of Peters and Steenbrink \cite{PetersSteenbrink} provides a thorough treatment.
We include examples relevant to our situation.
The full machinery, as developed e.g. by Carlson in \cite{Carlson}, is not strictly necessary.
However, using this language streamlines some of the arguments.
Throughout this section, we fix a ring $k$ such that $\bZ\subseteq k\subseteq \bR$.
Most often, $k$ will be a field.

\subsection{Definitions}

First recall some standard definitions.

\begin{definition}
 A $k$-Hodge structure of weight $w$ is a $k$-module $V_k$ and the data of a decreasing filtration by complex subspaces $F^\bullet$ on $V_\bC:=V_k\otimes_k \bC$
 $$
 \cdots\subseteq F^p\subseteq F^{p-1}\subseteq\cdots\subseteq V_\bC
 $$
 The filtration is called the Hodge filtration and is required to satisfy
 $$
 F^p\oplus \conj{F^{w+1-p}}=V_\bC
 $$ 
\end{definition}

\begin{definition}
 A $k$-Mixed Hodge structure is a $k$-module $V_k$ with an increasing filtration $W_\bullet$ defined over $k\otimes_\bZ \bQ$
 $$
 \cdots W_n\subseteq W_{n+1}\subseteq \cdots \subseteq \left(V_k\otimes_\bZ \bQ\right)
 $$
 and a decreasing filtration $F^\bullet$ on $V_\bC$ such that $F^\bullet$ induces a $k$-Hodge structure of weight $n$ on
 $$
 \gr_n^{W} V = W_n/W_{n-1}
 $$
 The filtration $W_\bullet$ is called the weight filtration.
\end{definition}

\begin{remark}
 We can take duals of (mixed) Hodge structures and overall, they form an abelian category. 
 Negative indexing in the filtration is allowed.
 See \cite[Section 3.1]{PetersSteenbrink} for more background.
 
 For future use, we recall the definition of the dual Hodge structure.
 To describe the Hodge and weight filtrations on the dual of $V$, denoted $V^\vee$, let 
 \begin{equation} 
  \begin{aligned}
  \label{eqn:def_dual} 
    F^{p+1} V^\vee &= \{\xi\in V^\vee\vert \xi(F^{-p} V)=0\}\\
    W_{n-1} V^\vee &= \{\xi \in V^\vee\vert \xi(W_{-n})=0\}
  \end{aligned}
 \end{equation}
\end{remark}
 
\begin{remark}
 In the definition of mixed Hodge structures, the weight filtration was defined only after allowing $\bQ$-coefficients.
 However, if it came from a $\bZ$-module, the position of the lattice will be relevant.
\end{remark}

\begin{example}
\label{eg:mhs01}
 Let $C$ be a compact Riemann surface and $S\subset C$ a finite set of points.
 On the relative cohomology group $H^1(C,S)$ we have a natural mixed Hodge structure with weights $0$ and $1$.
 This is the same as the compactly supported cohomology of the punctured surface $C\setminus S$.
 
 We have the exact sequence
 $$
 0\to W_0\into H^1(C,S)\onto H^1(C)\to 0
 $$
 The sequence is valid with any coefficients, so we consider it over $\bZ$.
 We have the canonical identification $W_0=\widetilde{H^0(S)}$ which is the reduced cohomology of the set $S$.
 
 Here is the mixed Hodge structure on $H^1(C,S)$.
 The weight filtration has $W_0$ defined by the exact sequence, and $W_1$ is the entire space.
 The holomorphic $1$-forms on the Riemann surface $C$ give also relative cohomology classes, so form a subspace
 $$
 F^1\subset H^1_\bC(C,S)
 $$
 This subspace maps isomorphically onto the holomorphic $1$-forms on $H^1_\bC(C)$.
 This describes the mixed Hodge structure, and according to Carlson \cite[Theorem A]{Carlson} it recovers the punctured curve in many cases. 

\paragraph{The dual picture.}
We shall often work with duals, because the constructions are more natural.
Dualizing the above sequence we find
$$
0\from W_0^\vee \from \check{H}^1(C,S)\from \check{H}^1(C)\from 0
$$
On the space $\check{H}^1(C,S)$ we have a mixed Hodge structure of weights $-1$ and $0$ (see \autoref{eqn:def_dual}).

The space $W_{-1}$ is the image of $\check{H}^1(C)$ which is pure of weight $-1$.
The space $W_0$ is everything.
The Hodge filtration has $F^0\check{H}^1(C,S)$ equal to the annihilator of $F^1H^1(C,S)$.
In particular it contains the image of $F^0\check{H}^1(C)$, which is the annihilator of $F^1H^1(C)$.
Moreover, we have the (natural) isomorphism over $\bC$
$$
W_0^\vee\from F^0\check{H}^1(C,S)/F^0\check{H}^1(C)
$$
\end{example}

\subsection{Splittings}

The concepts below were first analyzed by Carlson \cite{Carlson}, which provides more details.
We work exclusively with mixed Hodge structures of weights $\{0,1\}$ and their duals, with weights $\{-1,0\}$.
They are viewed as extensions of pure Hodge structures of corresponding weights.
\autoref{eg:mhs01} describes the mixed Hodge structures that occur in later sections.

\begin{definition}
\label{def:split}
 Fix a ring $L$ with $k\subseteq L \subseteq \bR$.
 A $k$-mixed Hodge structure
 $$
 0\to W_0 \to E \to H^1\to 0
 $$
 is \emph{$L$-split} if the sequence, after extending scalars to $L$, is isomorphic as a sequence of $L$-mixed Hodge structures to
 $$
 0\to W_0 \to W_0\oplus H^1\to H^1\to 0
 $$
 The mixed Hodge structure in this sequence is the direct sum of the pure structures.
 The isomorphism is required to be defined over $L$, but it must map the weight and Hodge filtrations isomorphically.
\end{definition}

\begin{remark}
\label{remark:splittings}
\leavevmode
 \begin{enumerate}
  \item[(i)] The definition for splittings of duals is analogous.
  A mixed Hodge structure is $L$-split if and only if its dual is.
  \item[(ii)] Giving a splitting is the same as giving a map defined over $L$
  $$
  \sigma: H^1\to E
  $$
  which is the identity when composed with projection back to $H^1$.
  It is required to map $F^1H^1$ isomorphically to $F^1E$.
  \item[(iii)] Mixed Hodge structures as above are always $\bR$-split.
  In the dual picture, we have the sequence
  $$
  0\from W_0^\vee \from E^\vee \from \check{H}^1\from 0
  $$
  Then $F^0E^\vee\cap \conj{F^0E^\vee}$ is a real subspace which maps isomorphically onto $W_0^\vee (\bR)$.
  Over $\bR$ we can thus lift $W_0^\vee$ inside $E^\vee$ using this subspace and this provides the splitting.  
 \end{enumerate}
\end{remark}

\begin{example}
\label{eg:splitting_ell_curve}
 It is more convenient to describe splittings of dual sequences, and below is the simplest example.
 Consider a pure weight $-1$ Hodge structure $\check{H}^1_\bZ=\langle a,b\rangle$ with filtration $F^0\check{H}^1=\langle a+\tau b\rangle$, where $\Im \tau>0$.
 This defines an elliptic curve 
 \[
  \Jac(\check{H}^1):=\check{H}^1_\bC/\left(F^0\check{H}^1 + \check{H}^1_\bZ \right) \cong \raisebox{0.1em}{$\bC$} /  \raisebox{-0.1em} {$(\bZ \oplus\bZ\tau)$}
 \]
 Note that $\check{H}^1_\bC=F^0\check{H}^1 \oplus \conj{F^0\check{H}^1}$ (since $\tau\neq \conj{\tau}$) and in particular $\check{H}^1_\bZ\cap F^0\check{H}^1=\{0\}$ (since integral elements are invariant under complex conjugation).
 
 Consider now possible extensions of the form
 $$
 0\from W_0\from E \from \check{H}^1\from 0
 $$
 Assume $W_0$ is of $\bZ$-rank $1$, generated by $c$.
 Lift it to some $c_1\in E_\bZ$.
 It gives a map
 $$
 \sigma_\bZ:W_0(\bZ) \to E_\bZ
 $$
 Note that the lift $c_1$ is ambiguous, up to addition of terms $xa+yb$ with $x,y\in \bZ$.
 Here, the generators of $\check{H}^1$ are identified with their image inside $E$.
 
 The extra data on $E$ is a subspace $F^0E$ which contains $F^0\check{H}^1$, is complex two-dimensional, and maps surjectively onto $W_0$.
 Pick a vector $v\in F^0E$ which maps to $c$.
 It must be of the form
 $$
 v=c_1 + \lambda a + \mu b
 $$
 where $\lambda,\mu \in \bC$.
 Note that the lift $v$ is ambiguous, up to the addition of complex multiples of $a+\tau b$ (which generate $F^0\check{H}^1=F^0E\cap \ker (E\to W_0) $).
 This provides a second lift
 $$
 \sigma_\bR:W_0(\bZ)\to E_\bC/F^0 \check{H}^1
 $$
 We can take the difference of $\sigma_\bZ$ and $\sigma_\bR$.
 Because projecting $\sigma_\bZ-\sigma_\bR$ back to $W_0$ is the zero map, their image must land in $\check{H}^1_\bC$.
 Taking into account also the ambiguity in the definition of $\sigma_\bZ$, we get a \emph{canonical} map
 $$
 \sigma_\bZ-\sigma_\bR:W_0(\bZ)\to \check{H}^1_\bC/\left(F^0\check{H}^1 + \check{H}^1_\bZ \right)
 $$
 So we get a canonical element of the elliptic curve associated to the Hodge structure $\check{H}^1$. 
 This element is zero if and only if the sequence is $\bZ$-split.
 Indeed, the vanishing of this element means we could choose the lift $v$ above to be integral.
 
 The element is torsion in the elliptic curve if and only if the sequence is $\bQ$-split.
 It means we could choose $v$ above with rational coefficients in $a$ and $b$.
\end{example}

\subsection{Extension classes and field changes}
\label{subsec:exts_fields}

This section contains a discussion of algebraic facts needed later.
The details of the constructions are available in \cite{Carlson} and \cite[Chapter 3.5]{PetersSteenbrink}.

\paragraph{Jacobians and extensions}
For a $\bZ$-Hodge structure $H^1$ of weight $1$, its Jacobian is defined using the dual Hodge structure $\check{H}^1$ by
$$
\Jac_\bZ H^1 := \check{H}^1(\bC)/\left(F^0\check{H}^1+\check{H}^1(\bZ)\right)
$$
This is a compact complex torus, again since $\check{H}^1_\bC=F^0\check{H}^1 \oplus \conj{F^0\check{H}^1}$ and the $\bZ$-lattice doesn't intersect $F^0$.
As in \autoref{eg:splitting_ell_curve}, extensions of $H^1$ by a weight $0$ Hodge structure $W_0$
$$
0\to W_0 \to E \to H^1\to 0
$$
are classified by elements in $\Hom_\bZ(\check{W}_0(\bZ),\Jac_\bZ H^1)$.
Rather, it is dual extensions that are classified by such elements.

Let now $K$ be a larger field or ring $\bZ\subseteq K\subseteq \bR$ and $H^1$ be a $K$-Hodge structure of weight $1$.
We can also define a Jacobian
$$
\Jac_K H^1 := \check{H}^1(\bC)/\left(F^0\check{H}^1+\check{H}^1(K)\right)
$$
It is an abelian group (even a $K$-vector space, usually of infinite dimension) with no structure of manifold.
Extensions are still classified by elements of $\Hom_K(\check{W}_0(K),\allowbreak \Jac_K H^1)$, where the Jacobian is viewed as a $K$-vector space.

\paragraph{Morphisms}
If $H^1$ has a $\bZ$-structure and we extend scalars to $K$, then we have a natural map of abelian groups
$$
\Jac_\bZ H^1\to \Jac_K H^1
$$
For example $\Jac_\bQ H^1=\Jac_\bZ H^1/\langle\textrm{torsion}\rangle$ and an extension class is torsion in the usual Jacobian if and only if the extension splits over $\bQ$.

More generally, suppose we have (after extending scalars to $K$) an inclusion of Hodge structures $H^1_\iota \into H^1$.
The dual map becomes $\check{H}^1\onto \check{H}^1_\iota$ and we have an induced surjection on Jacobians
$$
\Jac_\bZ H^1 \onto \Jac_K H^1_\iota
$$
From an extension class $\xi\in \Hom_\bZ(\check{W}_0(\bZ),\Jac_\bZ H^1)$ we get another one $\xi_\iota\in \Hom_K(\check{W}_0(K),\allowbreak \Jac_K H^1)$ by composing with the above map.
This gives a corresponding extension of $K$-Hodge structures.
Given a $K$-subspace $S\subseteq \check{W}_0(K)$ we can restrict the extension class $\xi_\iota$ to it and get another such extension.
Note that a subspace of $\check{W}_0(K)$ corresponds to a quotient of $W_0(K)$.

\section{Splittings over Affine Invariant Manifolds}
\label{sec:splittings}

In this section we identify variations of mixed Hodge structure that naturally exist on affine invariant manifolds.
The main result is that they split after an appropriate extension of scalars, in the sense of the previous section.

\subsection{Setup.}

Consider an affine invariant manifold $\cM$ inside a stratum of flat surfaces $\cH$.
It was proved in \cite[Theorem 7.3]{ssimple} that the variation of Hodge structure over $\cM$ given by the first cohomology $H^1$ splits as
$$
H^1=\left(H^1_{\iota_0}\oplus\cdots\oplus H^1_{\iota_{r-1}}\right)\oplus V
$$
Each term above gives a variation of Hodge structure.
Moreover, the direct sum in the parenthesis comes from a $\bQ$-local system. 
Each summand $H^1_\iota$ corresponds to an embedding $\iota$ of a totally real number field $k$, and comes from a local system defined over that embedding.
The embedding $\iota_0$ is the distinguished embedding.

Recall the tangent space $T\cH$ to the stratum is given, via period coordinates, by the relative cohomology of the underlying surfaces.
Restrict the tangent bundle $T\cH$ to $\cM$.
It projects to $H^1$ and we can take the preimage of the summands coming from the totally real field.
This yields an exact sequence of $\bQ$-local systems
\begin{align}
\label{eqn:big_ses}
0\to W_0 \to E \overset{p}{\rightarrow} \left(H^1_{\iota_0}\oplus\cdots \oplus H^1_{\iota_{r-1}}\right) \to 0
\end{align}
Here $W_0$ is the local system of purely relative cohomology classes.
It coincides with the (reduced) cohomology of the marked points (i.e. zeroes of the $1$-form).

This provides a variation of $\bQ$-mixed Hodge structures in the following sense.
Above each point in $\cM$ we have an induced mixed Hodge structure.
The Hodge filtrations $F^\bullet$ give holomorphic subbundles of the corresponding local systems.
The Griffiths transversality conditions are empty in this case.

\begin{remark}
 Below we discuss local systems defined over a particular field, for example $\iota_0(k)$ from above.
 To define this notion, fix a normal closure $K$ containing all the embeddings of $k$, with Galois group over $\bQ$ denoted $G$.
 Given a $\bQ$-local system $V_\bQ$, we can extend scalars to $K$ and denote it $V_K$.
 A sublocal system $W\subset V_K$ is ``defined over $\iota_l(k)$" if it is invariant by the subgroup $G_{\iota_l}\subset G$ stabilizing $\iota_l(k)$.
 
 We will omit the explicit extension of scalars to $K$ below, and just say that $W\subset V$ is a sublocal system defined over $\iota_l(k)$.
\end{remark}

\paragraph{The tangent space of the affine manifold.}
According to results of Wright \cite[Thm. 1.5]{Wright_field} the tangent space $T\cM$ to the affine submanifold gives a local subsystem $T\cM\subset E$, which is defined over $k$ (rather, $\iota_0(k)$).
It has the property that $p(T\cM)=H^1_{\iota_0}$.
Define the kernel of the map $p$ (see \autoref{eqn:big_ses}) by
$$
W_0\cM:=W_0\cap T\cM
$$
Define also $T\cM_{\iota_l}$ to be the Galois-conjugate of the local system $T\cM$ corresponding to the embedding $\iota_l$.
Analogously define
$$
W_0\cM_{\iota_l}:=W_0\cap T\cM_{\iota_l}
$$
Note that $T\cM_{\iota_l}$ surjects onto $H^1_{\iota_l}$ with kernel $W_0 \cM_{\iota_l}$.

\subsection{The splitting}

The space $H^1_{\iota_l}$ contains holomorphic $1$-forms denoted $H^{1,0}_{\iota_l}$.
These also give \emph{relative} cohomology classes, i.e. a natural map $H^{1,0}_{\iota_l}\to H^1_{rel}$.
The main theorem of this section (below) is the compatibility of this map with $T\cM_{\iota_l}$. 
Note that in the case when $W_0$ is contained in $T\cM$ (i.e. the setup of \autoref{sec:alg_special_case}) the statement below holds trivially.

\begin{theorem}
\label{thm:iota_l_splitting}
 Consider the variation of $\iota_l(k)$-mixed Hodge structures
 \begin{align}
 \label{eqn:ses_iota_l}
 0\to W_0/W_0\cM_{\iota_l} \into p^{-1}(H^1_{\iota_l})/W_0\cM_{\iota_l} \onto H^1_{\iota_l}\to 0
 \end{align}
 It is an exact sequence of local systems defined over $\iota_l(k)$, and each space carries compatible (mixed) Hodge structures.
 
 Then this sequence is (pointwise) split over $\iota_l(k)$ in the sense of \autoref{def:split}.
 
 The splitting is provided by the local system $T\cM_{\iota_l}/W_0\cM_{\iota_l}$, i.e. the surjection in the sequence \eqref{eqn:ses_iota_l} can be split by an $\iota_l(k)$-isomorphism
 \begin{align}
 \label{eqn:H^1_inverse}
 H^1_{\iota_l} \toisom T\cM_{\iota_l}/W_0\cM_{\iota_l}
 \end{align}
\end{theorem}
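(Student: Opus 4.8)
The plan is to reduce the statement to an honest splitting assertion about an extension of pure Hodge structures, and then to use the negative curvature of the Hodge metric to rule out any nonzero extension class that varies holomorphically and is bounded. First I would observe that the sequence \eqref{eqn:ses_iota_l} is classified, as in \autoref{subsec:exts_fields}, by an extension class living in $\Hom_{\iota_l(k)}\bigl((\check W_0/\check{W_0\cM}_{\iota_l})\, , \Jac_{\iota_l(k)} H^1_{\iota_l}\bigr)$; equivalently, by a holomorphic section of the associated Jacobian bundle over $\cM$, where the Jacobian bundle has fibers $\check H^1_{\iota_l}(\bC)/(F^0\check H^1_{\iota_l}+\check H^1_{\iota_l}(\iota_l(k)))$. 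The claim to prove is that this class is identically zero, i.e. that the candidate splitting provided by $T\cM_{\iota_l}/W_0\cM_{\iota_l}$ respects the Hodge filtration. By \autoref{remark:splittings}(ii), to give the splitting it is enough to exhibit an $\iota_l(k)$-map $\sigma:H^1_{\iota_l}\to p^{-1}(H^1_{\iota_l})/W_0\cM_{\iota_l}$ splitting $p$ and sending $F^1$ isomorphically to $F^1$; the natural candidate for $\sigma$ is precisely the inclusion of $T\cM_{\iota_l}/W_0\cM_{\iota_l}$, and the only thing to check is the compatibility with $F^1$, which amounts to the vanishing of the extension class.

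Next I would set up the curvature argument. The key input is that $W_0$ is a \emph{flat} subbundle which is pure of weight $0$ (it is the reduced cohomology of the marked points, hence carries the constant, trivially filtered Hodge structure), while $H^1_{\iota_l}$ carries a polarized variation of weight $1$. For a variation of mixed Hodge structure extending a weight-$0$ piece by a weight-$1$ piece (equivalently, in the dual picture, a weight $-1$ by weight $0$), the normal function defined by the extension class takes values in a family of (generalized) intermediate Jacobians, and the relevant curvature estimate — the analogue of the results used for biextensions and normal functions, traceable to the Schmid/Cattani–Kaplan–Schmid norm estimates and exploited in this circle of ideas à la M\"oller — forces a bounded holomorphic such normal function to be locally constant, and in fact flat. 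Concretely: lift the candidate section $\sigma$ to a (multivalued) section $v$ of $F^1$ over the universal cover of $\cM$; the obstruction to $v$ being $\iota_l(k)$-valued, measured modulo $F^0\check H^1_{\iota_l}+\check H^1_{\iota_l}(\iota_l(k))$ as in \autoref{eg:splitting_ell_curve}, defines the extension class, and the Hodge-theoretic norm of this obstruction is a plurisubharmonic (or at least, subharmonic along every holomorphic disk) function that is bounded because $\cM$ has finite volume and sits inside the stratum with controlled degenerations; a bounded subharmonic function on $\cM$ is constant, and the $\SL_2\bR$-action (or the second variation / Griffiths transversality, which is vacuous here so the section is actually holomorphic and flat) then pins the constant to be zero.

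The main obstacle — and the step that will require real work — is making the curvature/boundedness argument rigorous on a non-compact base with the specific (and possibly bad) behavior of affine invariant manifolds near the boundary of the stratum: one cannot simply invoke compactness, and one must instead argue that the Hodge norm of the extension-class section grows at worst logarithmically (or is bounded) near the boundary, using that $\cM$ has finite Masur–Veech volume and that the variation of Hodge structure on $H^1$ has the standard tame/nilpotent-orbit degeneration. I expect to handle this either by a direct estimate in period coordinates, invoking the $\SL_2\bR$-equivariance to propagate a bound from a compact set of directions, or by passing to the "biextension"/height formalism where the relevant potential is known to be subharmonic and of controlled growth; once boundedness is in hand, the maximum principle plus flatness (Griffiths transversality being empty in weights $\{0,1\}$, as remarked after \eqref{eqn:big_ses}) gives that the extension class is flat, and a final argument — that a flat $\iota_l(k)$-normal function which is a section of a bundle of Jacobians with \emph{no} flat sections other than the torsion ones (because the monodromy on $H^1_{\iota_l}$ is, by Wright's irreducibility, large enough to kill invariants) — forces it to vanish, giving the splitting and the desired $\iota_l(k)$-isomorphism \eqref{eqn:H^1_inverse}.
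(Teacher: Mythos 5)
Your high-level strategy matches the paper's: dualize, produce the flat splitting from $T\cM_{\iota_l}$ and the $\bR$-splitting from Hodge theory, view the difference as a section $\psi_e$ of the negatively-curved bundle $\check{H}^{-1,0}_{\iota_l}$, show its Hodge norm is constant, and then kill it using Wright's irreducibility of $H^1_{\iota_l}$. You also correctly identify that the hard part is controlling this section at infinity, and that a naive appeal to compactness fails. However, the mechanism you propose for this step is not the one the paper uses, and as stated it has a genuine gap.

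The gap: you assert the extension-class section is \emph{bounded}, invoking finite Masur--Veech volume and tame/nilpotent-orbit degeneration, and then apply a maximum principle. Neither of these inputs is available in the form you need. Affine invariant manifolds (at the time, and in this paper) carry no known algebraic compactification with controlled degeneration of the VHS near a boundary divisor, so the Schmid/CKS norm estimates you allude to cannot be applied directly. And finite $\SL_2\bR$-invariant measure gives integrability against the invariant measure, not pointwise boundedness of Hodge norms: the Kontsevich--Zorich cocycle on $H^1$ genuinely has nonzero Lyapunov exponents, so Hodge-norm boundedness is false for flat sections in general. Moreover, the specific section $\psi_e$ is not a priori bounded either — the paper never proves boundedness of $\|\psi_e\|$. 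What the paper actually proves is much weaker but sufficient: \emph{subexponential growth along a.e.\ Teichm\"uller geodesic}. This is obtained dynamically, not Hodge-theoretically: the flat section $\phi_e$ of the dual relative cocycle must lie in the zero Lyapunov subspace (Oseledets) of a cocycle that is shown to be \emph{integrable} with respect to a carefully built \textbf{dual modified mixed Hodge norm} (a refinement of the Eskin--Mirzakhani--Mohammadi modified Hodge norm on $H^1_{rel}$ from \cite{EMM}, adapted to the dual), and the $\check{H}^1_{\iota_l}$-component of that norm dominates the usual Hodge norm of $\psi_e$. The "maximum principle" then used is not the compact-base one but the dynamical Lemma 5.2 of \cite{ssimple}, whose hypotheses are precisely subharmonicity, subexponential growth along a.e.\ geodesic, and at most linear growth under $\SL_2\bR$; this is what forces $\log\|\psi_e\|$ to be constant. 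Finally, your "flatness follows from the maximum principle" step skips a point: constancy of the norm plus the $\bar\partial\partial$ curvature identity gives \emph{both} $\nabla^{Hg}\psi_e = 0$ \emph{and} $\sigma^\dag\psi_e = 0$, from which one builds the Gauss--Manin-flat section $\alpha = \bar\psi_e\oplus\psi_e$ of $\check{H}^1_{\iota_l}$; only then does irreducibility kill it. In short: the architecture is right, but the step you flag as "requiring real work" is where the paper replaces classical Hodge-theoretic boundary control with the Oseledets theorem and the \cite{EMM} modified norm machinery, and your proposed substitutes would not go through.
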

Note that there are two ways to lift a cohomology class in $H^{1,0}_{\iota_l}$ to $H^1_{rel}$.
One uses the map \eqref{eqn:H^1_inverse}, and the other is by viewing a holomorphic $1$-form as a relative cohomology class.
This theorem claims that these two ways in fact agree.
\begin{proof}
The proof is in three steps.
In Step 1 we dualize the sequence \eqref{eqn:ses_iota_l} and use the Galois-conjugate tangent space to produce a flat splitting of the local systems.
We also find the $\bR$-splitting coming from the underlying Hodge structures.
Their difference is a (holomorphic) section of a bundle with negative curvature.

In Step 2, we show the section must have constant Hodge norm.
In Step 3 we use this to show that the section must come from a flat one, and thus must in fact be zero.
This concludes the proof, since it shows that the $\bR$-splitting was in fact defined over $\iota_l(k)$.
The next three sections deal with each step.
\end{proof}

\subsection{Proof of Step 1}

Because in the exact sequence \eqref{eqn:ses_iota_l} we quotient out $W_0\cM\iota_l$  we deduce the bundle $T\cM_{\iota_l}/W_0\cM_{\iota_l}$ maps isomorphically onto $H^1_{\iota_l}$.
Dualizing the sequence \eqref{eqn:ses_iota_l} we obtain
\begin{align}
\label{eqn:ses_iota_l_dual}
0\from \left(W_0/W_0\cM_{\iota_l}\right)^\vee \twoheadleftarrow \left(p^{-1}\left(H^1_{\iota_l}\right)/W_0\cM_{\iota_l}\right)^\vee \hookleftarrow \check{H}^1_{\iota_l}\from 0
\end{align}
Denote the annihilator of $T\cM_{\iota_l}/W_0\cM_{\iota_l}$ by $\left(T\cM_{\iota_l}/W_0\cM_{\iota_l}\right)^\perp$.
By the remark above, it maps isomorphically onto $\left(W_0/W_0\cM_{\iota_l}\right)^\vee$.
The inverse isomorphism defines a canonical flat map of $\iota_l(k)$ local systems, which is a splitting of the left surjection in the exact sequence \eqref{eqn:ses_iota_l_dual}
$$
\sigma_{\iota_l}:\left(W_0/W_0\cM_{\iota_l}\right)^\vee \to \left(p^{-1}\left(H^1_{\iota_l}\right)/W_0\cM_{\iota_l}\right)^\vee
$$

We now construct another splitting using the Hodge structures (see \autoref{remark:splittings} (iii)).
Consider the $F^0$ piece of the middle term in the sequence \eqref{eqn:ses_iota_l_dual}.
It maps surjectively onto $\left(W_0/W_0\cM_{\iota_l}\right)^\vee$ with kernel $F^0\check{H}^1_{\iota_l}$.
This gives a canonical splitting
$$
\sigma_\bR: \left(W_0/W_0\cM_{\iota_l}\right)^\vee_\bC\to \left(p^{-1}\left(H^1_{\iota_l}\right)/W_0\cM_{\iota_l}\right)^\vee_\bC/F^0\check{H}^1_{\iota_l}
$$
Note that because it really comes from an isomorphism of vector bundles, it is in fact holomorphic over the affine manifold.

Note that both maps $\sigma_\bR$ and $\sigma_{\iota_l}$ are splittings. 
This means that composing either with the surjection onto the left term of the sequence \eqref{eqn:ses_iota_l_dual} gives the identity.
So their difference has image in the kernel of the surjection in \eqref{eqn:ses_iota_l_dual}:
$$
\sigma_{\iota_l}-\sigma_\bR: \left(W_0/W_0\cM_{\iota_l}\right)^\vee_\bC\to \check{H}^1_{\iota_l}/F^0\check{H}^1_{\iota_l}
$$
Next, we assume that we are in some finite cover of the stratum where the local system $W_0$ is trivial; labeling the zeroes of the $1$-form suffices.
Then we can choose an element (same as a global section) of $\left(W_0/W_0 \cM_{\iota_l}\right)^\vee_\bC$ denoted by $e$.
By taking its image under the above map, we obtain a global over $\cM$ holomorphic section
$$
\psi_e :=(\sigma_{\iota_l}-\sigma_{\bR})(e)\in \Gamma(\cM; \check{H}^1_{\iota_l}/F^0\check{H}^1_{\iota_l})
$$

\subsection{Proof of Step 2}

Given the holomorphic section $\psi_e$ produced in Step 1, we now show it has constant Hodge norm.

\paragraph{Notation.} Denote the Hodge decomposition of $\check{H}^1$ by
$$
\check{H}^{1} = \check{H}^{0,-1}\oplus \check{H}^{-1,0}
$$
We keep the same notation for indices involving the embeddings $\iota_l$.

Note that $H^{0,-1}=F^0\check{H}^1$ gives a holomorphic subbundle, being identified with the annihilator of $H^{1,0}=F^{1}{H}^1$ (which is the bundle of holomorphic $1$-forms).
Using the polarization of $H^1$, we see that $\check{H}^1$ is isomorphic to $H^1$, up  to a shift of weight by $(1,1)$.

The section $\psi_e$ produced above is a holomorphic section of $\check{H}^{-1,0}$, endowed with the holomorphic structure when viewed as a quotient $\check{H}^{-1,0} = \check{H}^1/F^0\check{H}^1$.
This bundle has negative curvature (see \cite[Corollary 3.15]{ssimple}, with a weight shift).

We want to apply Lemma 5.2 from \cite{ssimple} to conclude the function $\log \norm{\psi_e}$ is constant.
For this, we need to check the boundedness and sublinear growth assumptions.
This is done below.

Note that this function is subharmonic by the calculation in \cite[Lemma 3.1]{ssimple} and the negative curvature of the bundle.
If $\norm{\psi_e}\neq 0$ identically, then it can vanish only on a lower-dimensional analytic subset.
Therefore, one can define the functions $f_N:=\max(-N,\log \norm{\psi_e})$ and apply \cite[Lemma 5.2]{ssimple} to them (and let $N\to +\infty$).
As the maximum of two subharmonic functions, each $f_N$ is itself subharmonic.

\paragraph{Checking assumptions.}
First, we examine how $\psi_e$ was defined.
We have the exact sequence
\begin{align}
\label{ses:all}
0\to W_0 \to E \to (H^1_{\iota_0}\oplus\cdots \oplus H^1_{\iota_{r-1}}) \to 0
\end{align}
and its dual
$$
0\from W^\vee_0 \from E^\vee \from (\check{H}^1_{\iota_0}\oplus\cdots\oplus \check{H}^1_{\iota_{r-1}} )\from 0
$$
We have the $\bR$-splitting coming from Hodge theory (see \autoref{remark:splittings} (iii))
$$
W_0^\vee(\bR)\to E^\vee_{\bR}
$$
This gives a direct sum decomposition
\begin{align}
\label{eqn:mhs_splitting}
E^\vee(\bR) \cong W_0^\vee(\bR)\oplus \check{H}^1(\bR)
\end{align}
and an induced metric from each factor.
At the level of the exact sequence \eqref{ses:all} this is the same as using the harmonic representatives of absolute cohomology classes to obtain relative cohomology classes.

\paragraph{Terminology for norms.}
Using the splitting from \eqref{eqn:mhs_splitting} and more generally the same for $H^1_{rel}$, we can put norms on the cocycle by putting a norm on each factor.
Several norms will appear below, and we explain now the terminology.

On $H^1$ and its dual $\check{H}^1$ one has the Hodge norms, coming from the Hodge structure (which are preserved by duality).
These Hodge norms will be denoted $\norm{-}$ in the sequel.
Eskin, Mirzakhani and Mohammadi \cite[Sec. 7]{EMM} introduced a modified Hodge norm on $H^1$, obtained by increasing the usual Hodge norm.
Note that the dual of this modified Hodge norm, on $\check{H}^1$, is \emph{less than} the Hodge norm on $\check{H}^1$.
The modified Hodge norms on $H^1$ and $\check{H}^1$ will be denoted $\norm{-}'$.

Finally, we have the modified mixed Hodge norms on $H^1_{rel}$ and $\check{H}^1_{rel}$.
These will be denoted $\vertiii{-}$ (note that they will not be dual to each other).
On $H^1_{rel}$, the modified mixed Hodge norm is defined in \cite{EMM} by putting the constant norm on $W_0$ and the modified Hodge norm on $H^1$.
On $\check{H}^1_{rel}$, the modified mixed Hodge norm is defined in \autoref{eqn:dual_modified_Hg} by changing the norm on the $W_0^\vee$ factor.

Below, the adjective \emph{dual} means that the norm is on the dual space, and the adjective \emph{mixed} means that it is in relative (co)homology.
We now proceed to the details.

\paragraph{Modified Hodge norm.}
The modified Hodge norm $\norm{-}'$ on $H^1$ is defined in \cite[eqn. (33) and below]{EMM}) and the cocycle on $H^1$ is integrable for it (\cite[Lemma 7.4]{EMM}).

Next, recall the splitting $H^1_{rel}=W_0\oplus H^1$ coming from Hodge theory using holomorphic lifts (in their language -- harmonic representatives).
Using it, the modified Hodge norm $\vertiii{-}$ on $H^1_{rel}$ is defined using the modified Hodge norm on $H^1$ and the constant norm on $W_0$ (see \cite[eq. (40) and above]{EMM}).

For this modified Hodge norm on $H^1_{rel}$ the cocycle is bounded \cite[Lemma 7.5]{EMM}.
The main consequence \cite[Lemma 7.6]{EMM} is the following.
For the splitting $H^1_{rel}=W_0 \oplus H^1$, write the cocycle matrix as
\begin{align}
\label{eqn:explicit_KZ}
 \begin{bmatrix}
  1 & U(x,s)\\
  0 & A(x,s)
 \end{bmatrix}
 \textrm{ where }U(x,s):H^1_x \to (W_0)_{g_s x}
\end{align}
Then $\norm{U(x,s)}_{op}\leq e^{m's}$ for some $m'$, where $H^1$ is viewed with the modified Hodge norm $\norm{-}'$ and $W_0$ with the constant norm.
Explictly, this says
\begin{align}
\label{eqn:U_bdd_H1}
 \norm{U(x,s)v}\leq e^{m's}\norm{v}' \textrm{ where }v\in H^1_{x}\textrm{ and so }U(x,s)v\in (W_0)_{g_sx}
\end{align}

\paragraph{Dual modified Hodge norm.}
For a point $x$ in the stratum, let $r(x)\in [1,\infty)$ be the ratio of the modified to the usual Hodge norms on $H^1$.
In other words
\begin{align}
 r(x) := \sup_{0\neq v\in H^1_x} \frac{\norm{v}' }{\norm{v} } 
\end{align}
Note that $r(x)\geq 1$ since $\norm{-}'$ is always at least the usual Hodge norm (see before eq. (40) in \cite{EMM}).
By definition $r(x)\cdot \norm{v} \geq \norm{v}'$ for any $v\in H^1_x$.

On $\check{H}^1$ there are two norms -- the dual of the usual Hodge norm, denoted $\norm{-}$, and the dual of the modified Hodge norm, denoted $\norm{-}'$.
The inequality involving $r(x)$ is now reversed, i.e. letting $\xi\in \check{H}^1_x$, we have
\begin{align}
\label{eqn:r_dual_bound}
 \norm{\xi} \leq r(x)\cdot \norm{\xi}'
\end{align}
To see this, using $\frac 1 {r(x)} \norm{v}'\leq \norm{v}$ for the second step, we have
\begin{align}
 \norm{\xi}  = \sup_{\norm{v}=1} |\xi(v)| \leq \sup_{\frac 1 r \norm{v}'=1} |\xi(v)| = r(x)\norm{\xi}'
\end{align}

\paragraph{Dual modified mixed Hodge norm.}
We now explain how to modify the Hodge norm on the dual cocycle $\check{H}^1_{rel}$ which is relevant to our arguments.
Recall the splitting $\check{H}^1_{rel} = W_0^\vee \oplus \check{H}^1$ coming from Hodge theory.
Given a vector $w\oplus h\in H^1_{rel}$ its ordinary mixed Hodge norm is $\norm{w\oplus h}^2=\norm{w_0}^2+\norm{h}^2$ (using the usual Hodge norm for $h$ and constant norm for $w_0$).
Define now its dual modified mixed Hodge norm via
\begin{align}
\label{eqn:dual_modified_Hg}
 \vertiii{w\oplus h}^2 := \norm{w_0}^2\cdot r(x)^2 + \norm{h}^2
\end{align}
Note that the modification affects only the $W_0^\vee$ part of the norm, not the $\check{H}^1$.
\begin{proposition}
 For the modified dual Hodge norm defined in \autoref{eqn:dual_modified_Hg}, the Kontsevich-Zorich cocycle is bounded.
\end{proposition}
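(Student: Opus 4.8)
The plan is to write the Kontsevich--Zorich cocycle on $\check H^1_{rel}$ in the splitting $\check H^1_{rel}=W_0^\vee\oplus\check H^1$ dual to \eqref{eqn:mhs_splitting} and to bound it one block at a time. Working, as throughout, on a finite cover of the stratum on which $W_0$ is trivialized -- so that the cocycle on the flat bundle $W_0$ (hence on $W_0^\vee$) is the identity and the constant norm makes sense -- dualizing the matrix \eqref{eqn:explicit_KZ} shows that the forward cocycle on $\check H^1_{rel}$ over time $s$ is block lower-triangular with respect to this splitting: the identity on the summand $W_0^\vee$; the dual cocycle $\check A(x,s)$ of the absolute Kontsevich--Zorich cocycle on the summand $\check H^1$; and an off-diagonal block $W_0^\vee\to\check H^1$ built from $\check A(x,s)$ and the transpose of $U(x,s)$ from \eqref{eqn:explicit_KZ}. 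A block lower-triangular cocycle whose three blocks are each bounded by $Ce^{M|s|}$ is itself so bounded, and its inverse has the same shape, so it suffices to bound the three blocks in the norms prescribed by \eqref{eqn:dual_modified_Hg}.

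Two of the blocks are handled by facts already available. The summand $\check H^1$ carries the \emph{usual} Hodge norm in \eqref{eqn:dual_modified_Hg}, for which the absolute Kontsevich--Zorich cocycle is classically bounded, $\norm{A(x,s)v}\le e^{|s|}\norm{v}$, hence also $\norm{\check A(x,s)}_{op}\le e^{|s|}$. The identity block on $W_0^\vee$ is an isometry for the constant norm, but \eqref{eqn:dual_modified_Hg} weights $W_0^\vee$ by the factor $r(x)$ at the source and $r(g_sx)$ at the target, so as a map between these normed fibers it has operator norm $r(g_sx)/r(x)$; the bound $r(g_sx)\le Ce^{M|s|}r(x)$ is precisely the statement that the modified Hodge norm, and with it the ratio $r(\cdot)$, changes at most exponentially along the geodesic flow, which is part of the construction of the modified norm in \cite[Sec.~7]{EMM}.

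The off-diagonal block is where the factor $r(x)^2$ in \eqref{eqn:dual_modified_Hg} does its work. Its $\vertiii{-}$-operator norm is at most $\norm{\check A(x,s)}_{op}$ times the operator norm of the transpose of $U(x,s)$ -- computed with the constant norm on the relevant copy of $W_0^\vee$ and the usual Hodge norm on $\check H^1$ -- times the factor $1/r(x)$ arising from passing from the $\vertiii{-}$-norm on the source $W_0^\vee$ (which is $r(x)\norm{-}_{const}$) to the plain constant norm. By transposition the middle factor equals the operator norm of $U(x,s):H^1\to W_0$ computed with the usual Hodge norm on $H^1$ and the constant norm on $W_0$, and by \eqref{eqn:U_bdd_H1} together with $\norm{v}'\le r(x)\norm{v}$ (the definition of $r(x)$, dual to \eqref{eqn:r_dual_bound}) this is at most $r(x)e^{m's}$. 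The $1/r(x)$ and the $r(x)$ cancel, leaving the off-diagonal block bounded by $e^{|s|}\cdot e^{m's}=e^{(m'+1)|s|}$, independently of the (unbounded) function $r$. In words: the $r(x)$-inflation of the norm on $W_0^\vee$ in \eqref{eqn:dual_modified_Hg} is exactly what is needed to absorb the loss incurred in converting \eqref{eqn:U_bdd_H1}, which is stated for the \emph{modified} Hodge norm on $H^1$, into the usual Hodge norm appearing in \eqref{eqn:dual_modified_Hg}. Assembling the three block bounds, and running the identical argument for the same-shaped inverse cocycle, proves the proposition.

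The step I expect to require the most care is the bookkeeping in the off-diagonal estimate: keeping straight which of $\norm{-}$, $\norm{-}'$, $\vertiii{-}$ and the constant norm sits on which of $H^1$, $\check H^1$, $W_0$, $W_0^\vee$, and at the source fiber versus the target fiber, so that the dualization of \eqref{eqn:U_bdd_H1} and the comparison \eqref{eqn:r_dual_bound} are applied at the base point that makes the two powers of $r$ genuinely cancel rather than merely be comparable. The exponential control of $r$ along the flow needed for the identity block, and the classical Hodge-norm bound on the absolute cocycle, are quoted from \cite{EMM} and from the negative curvature estimates of \cite{ssimple} rather than reproved.
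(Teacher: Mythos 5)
Your proof is correct and takes essentially the same route as the paper: write the dual cocycle as the inverse transpose of the block-triangular matrix \eqref{eqn:explicit_KZ}, use the classical Hodge-norm bound on the $\check H^1$ diagonal block, convert \eqref{eqn:U_bdd_H1} via \eqref{eqn:r_dual_bound} into a bound on $U^t$ that gets cancelled by the $1/r(x)$ coming from the source norm in \eqref{eqn:dual_modified_Hg}, and handle the non-isometry of the identity block on $W_0^\vee$ by the exponential control $r(g_sx)\leq Ce^{M|s|}r(x)$. If anything, your block-by-block bookkeeping makes the role of the $r(x)$-cancellation and the separate $r(g_sx)/r(x)$ estimate on the $W_0^\vee$ diagonal more explicit than the paper's phrasing.
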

\begin{proof}
 Given the explicit form of the KZ cocycle in \autoref{eqn:explicit_KZ}, its dual cocycle (for the dual splitting) will be given by the inverse transpose of that matrix.
 This reads
 \begin{align}
  \left(
  \begin{bmatrix}
  1 & U(x,s)\\
  0 & A(x,s)
 \end{bmatrix}^t\right)^{-1} =
  \begin{bmatrix}
  1 & 0\\
  -(A(x,s)^t)^{-1} \cdot U(x,s)^t & (A(x,s)^t)^{-1}
 \end{bmatrix}
 \end{align}
 Next, recall that the cocycle $A(x,s)$ is bounded in both forward and backwards time, since it corresponds to $H^1$, and we have the usual Hodge norm on that part.
 To show boundedness of the full cocycle, it suffices to prove that $\vertiii{U(x,s)^t}_{op}\leq e^{m's}$ for some $m'$.
 Note that $U(x,s)^t : (W_0^\vee)_{g_sx}\to \check{H}^1_x$, since $U(x,s)$ goes the other way between dual spaces (see \eqref{eqn:explicit_KZ}).
 The bound on the operator norm of $U(x,s)^t$ needs to hold when $W_0^\vee$ is viewed with the norm $r(x)\norm{-}$, and $\check{H}^1$ is with the usual Hodge norm.
 
 The operator $U(x,s)$ is bounded for the modified Hodge norm on $H^1$ and usual norm on $W_0$ (see \eqref{eqn:U_bdd_H1}).
 Therefore $U(x,s)^t$ satisfies the same bound for the dual modified Hodge norm on $\check{H}^1$ and usual norm on $W_0^\vee$.
 This reads
 \begin{align}
  \norm{U(x,s)^t \check{w}}' \leq e^{m's} \norm{\check{w}} \textrm{ where }\check{w}\in (W_0^\vee)_x 
 \end{align}
 However, using \autoref{eqn:r_dual_bound} which relates the usual and modified Hodge norms in the dual $\check{H}^1$ we find
 \begin{align}
  \norm{U(x,s)^t \check{w}}\cdot \frac 1 {r(x)} \leq \norm{U(x,s)^t \check{w}}' \leq e^{m's} \norm{\check{w}}
 \end{align}
 Moving $r(x)$ to the right side, this exactly says that $U(x,s)^t$ is bounded when $W_0^\vee$ carries the dual modified mixed Hodge norm.
 
 Finally, note that the identity map on $W_0^\vee$ no longer acts by isometries, since the norm on that factor depends now on the basepoint.
 However, from the boundedness of the cocycle on $H^1$ with the usual, as well as the modified Hodge norm, it follows that $r(g_s x)\leq e^{m''s}r(x)$ for all $s$.
 Thus, the Kontsevich-Zorich cocycle on $\check{H}^1_{rel}$ is bounded for the dual modified mixed Hodge norm.
\end{proof}

\paragraph{Properties of the dual modified mixed Hodge norm.}
First, note that the boundedness properties of the cocycle descend to the various pieces of $\check{H}^1$ and $\check{H}^1_{rel}$.
Returning to the notation in \autoref{ses:all}, \eqref{eqn:mhs_splitting}, we constructed a norm $\vertiii{-}$ satisfying
\begin{enumerate}
 \item [(i)] The Kontsevich-Zorich cocycle on $E$ and $E^\vee$ is integrable for this norm.
 Moreover, it satisfies the absolute bound for some universal constant $C>0$
 \begin{align}
 \label{eqn:g_t_bdd_mod_norm}
 -C\cdot T \leq \log \vertiii{g_T} \leq C\cdot T
 \end{align}
 \item [(ii)] The projection $E\to H^1$ is norm non-increasing and dually the embedding $\check{H}^1\into E^\vee$ is norm non-decreasing (where $H^1$ and $\check{H}^1$ have the usual Hodge norms).
 In other words, if $\psi\in \check{H}^1$ is a section, then 
 \begin{align}
 \label{eqn:norm_non_dec}
  \norm{\psi}\leq \vertiii{\psi} \textrm{\hskip 0.5em (in fact, we have equality in this case)}
 \end{align}
 Moreover, if $\phi$ if a section of $H^1_{rel}$ and $\psi$ is its $\check{H}^1$-component, then $\vertiii{\phi}\geq \norm{\psi}$.
 In other words, the dual modified mixed Hodge norm of $\phi$ dominates the usual Hodge norm of $\psi$, since the decomposition $\check{H}^1_{rel}=W_0^\vee \oplus\check{H}^1$ is orthogonal for the dual modified mixed Hodge norm.
\end{enumerate}
Consider now 
$$
0\to W_0 \to p^{-1}(H^1_{\iota_l})\to H^1_{\iota_l} \to 0
$$
In the dual picture, with the dual modified mixed Hodge norm, we have
\begin{align}
\label{eqn:ses_iota_l_dual_again}
0\from W_0^\vee \twoheadleftarrow p^{-1}(H^1_{\iota_l})^\vee \hookleftarrow \check{H}^1_{\iota_l} \from 0
\end{align}
Corresponding to $W_0\cM_{\iota_l}$ in the first sequence is its annihilator $W_0\cM_{\iota_l}^\perp$ in the dual.
It is naturally identified with $\left(W_0/W_0\cM_{\iota_l}\right)^\vee$.

Now $p^{-1}(H^1_{\iota_l})^\vee$ also contains the annihilator of $T\cM_{\iota_l}$ denoted $T\cM_{\iota_l}^\perp$.
The surjection in the dual sequence \eqref{eqn:ses_iota_l_dual_again} gives an isomorphism
$$
W_0\cM_{\iota_l}^\perp \from T\cM_{\iota_l}^\perp
$$
The section $\sigma_{\iota_l}$ from Step 1 is the inverse of this isomorphism.

Given $e\in(W_0/W_0\cM_{\iota_l})^\vee$ (which is naturally $W_0\cM_{\iota_l}^\perp$) we have $\phi_e$ defined as
$$
\phi_e:=\sigma_{\iota_l}(e) \in T\cM^\perp_{\iota_l}\subset p^{-1}(H^1_{\iota_l})^\vee
$$
Note that $\phi_e$ is a flat global section of $p^{-1}(H^1_{\iota_l})^\vee$.
This last bundle, equipped with the dual modified mixed Hodge norm, gives rise to an integrable cocycle; the Oseledets theorem thus holds.

Just like in \cite[Lemma 5.1]{ssimple} $\phi_e$ must be in the zero Lyapunov subspace, otherwise its norm would be unbounded on any set of positive measure.
In particular, its dual modified mixed Hodge norm grows subexponentially along a.e. \Teichmuller geodesic.

Recall the  splitting over $\bR$ for the sequence \eqref{eqn:ses_iota_l_dual_again}.
This comes from the decomposition
\begin{align}
 \label{eqn:dir_sum_R}
 p^{-1}(H^1_{\iota_l})^\vee (\bR)\cong \check{H}^1_{\iota_l} (\bR)\oplus \left(F^0\cap \conj{F^0}\right)
\end{align}
Here $F^0$ refers to the corresponding piece of the filtration of $p^{-1}(H^1_{\iota_l})$ and the construction was explained in \autoref{remark:splittings} (iii).
\autoref{eqn:dir_sum_R} is a direct sum decomposition of $\bR$-vector bundles and $\sigma_\bR$ comes from inverting the $\bR$-isomorphism
$$
W_0^\vee(\bR)\from F^0\cap \conj{F^0}
$$
This means that 
$$\psi_e = (\sigma_{\iota_l} - \sigma_\bR)(e)\in\check{H}^1_{\iota_l}(\bR)$$
is just the $\check{H}^1_{\iota_l}(\bR)$-component of $\phi_e$ in the decomposition \eqref{eqn:dir_sum_R}.

\begin{remark}
 For purposes of comparing metrics, we are using the isomorphism of $\bR$-vector bundles
 $$
 \check{H}^1_{\iota_l}(\bR)\toisom \check{H}^1_{\iota_l}(\bC)/F^0\check{H}^1_{\iota_l}
 $$
 The section we are considering can be viewed as living in either.
 In the second one, it is also holomorphic for the natural holomorphic structure.
\end{remark}

From the property of the dual modified mixed Hodge norm in \eqref{eqn:norm_non_dec}, $\vertiii{\phi_e}$ bounds from above the usual Hodge norm of its $\check{H}^1_{\iota_l}$-component (which is $\psi_e$).
This gives the desired subexponential growth for the Hodge norm $\norm{\psi_e}$.

We must also verify that upon acting by an element $g\in \SL_2\bR$, the function has not increased by more than $C\norm{g}$, for some absolute constant $C$.
This follows from \autoref{eqn:g_t_bdd_mod_norm} and the fact that the norms are $\SO_2(\bR)$-invariant \cite[eq. (41)]{EMM}.

To conclude, the conditions of \cite[Lemma 5.2]{ssimple} are satisfied; therefore the Hodge norm $\norm{\psi_e}$ is constant.
\subsection{Proof of Step 3}

Step 2 showed that $\norm{\psi_e}$ is constant.
To conclude, we now show it must be zero.
Because $\norm{\psi_e}$ is constant, using \cite[Remark 3.3]{ssimple} for the formula for $\delbar\del$ it follows that
$$
0=\delbar \del \norm{\psi_e}^2 = \ip{\Omega \psi_e}{\psi_e} - \ip{\nabla^{Hg}\psi_e}{\nabla^{Hg}\psi}
$$
where $\Omega$ is the curvature of $\check{H}^{-1,0}_{\iota_l}$, which is negative-definite.
We conclude
\begin{align*}
\sigma^\dag \psi_e = 0 \hskip 1cm \textrm{and} \hskip 1cm \nabla^{Hg}\psi_e=0
\end{align*}
Recall that $\sigma:\Omega^{1,0}(\cM)\otimes \check{H}^{0,-1}_{\iota_l}\to\check{H}^{-1,0}_{\iota_l}$ is the second fundamental form of $\check{H}^1_{\iota_l}$ and the above identities hold in all direction on $\cM$, not just the $\SL_2\bR$ direction.
The curvature satisfies $\Omega=\sigma\sigma^\dag$.

Define now a section of $\check{H}^1_{\iota_l}$ by $\alpha := \conj{\psi_e}\oplus \psi_e$.
Then $\alpha$ is flat for the Gauss-Manin connection (since $\nabla^{GM}=\nabla^{Hg}+\sigma+\sigma^\dag$).
But the local system $H^1_{\iota_l}$ is irreducible (see \cite[Thm. 1.5]{Wright_field}) so it has no flat global sections over the affine manifold $\cM$.

Therefore $\alpha=0$, so $\psi_e=0$ and thus $\sigma_{\iota_l}=\sigma_\bR$.
This finishes the proof of \autoref{thm:iota_l_splitting}.
\hfill \qed

\section{Algebraicity and torsion}
\label{sec:algebraicity_torsion}

In this section, we prove the theorems stated in the introduction.
First we combine the results of the previous section to find that a certain twisted version of the Abel-Jacobi map is torsion.
Using this result, we then prove that affine invariant manifolds are algebraic.

\subsection{Combining the splittings}
\label{subsec:combinig_splittings}

\textbf{Setup.}
In the setting of the previous section, over an affine manifold $\cM$ we had an exact sequence of $\bZ$-mixed Hodge structures
$$
0\to W_0 \to E \overset{p}{\to} \bigoplus_\iota H^1_\iota \to 0
$$
We are assuming some fixed $\bZ$-structure on $\oplus_\iota H^1_\iota$.
As was explained in \autoref{subsec:exts_fields} this corresponds to a map
\begin{align}
\label{eqn:def_xi}
\xi:\check{W}_0(\bZ) \to \Jac_\bZ\left(\bigoplus_\iota H^1_\iota\right)
\end{align}
Thus for each element of $\check{W}_0(\bZ)$ we get a section of the bundle of Jacobians.
We are working with a factor of the actual Jacobian, but omit this from the wording.

By \autoref{thm:iota_l_splitting} for each $\iota$ the variation of $\iota(k)$-mixed Hodge structures
$$
0\to W_0/W_0 \cM_{\iota} \to p^{-1}(H^1_{\iota})/W_0 \cM_{\iota} \to H^1_{\iota}\to 0
$$
is split.
In the language of \autoref{subsec:exts_fields} this means that pointwise on $\cM$ the induced map of abelian groups (and $\iota(k)$-vector spaces)
$$
\xi_{\iota}:(W_0 \cM_{\iota})^\perp (\iota(k)) \to \Jac_{\iota(k)}H^1_{\iota}
$$
is in fact the zero map.
Recall $\xi_\iota$ is obtained from $\xi$ by composing with the quotient map
$$
\Jac_\bZ\left(\bigoplus_\iota H^1_\iota\right) \onto \Jac_{\iota(k)} H^1_\iota
$$
and restricting the domain to $(W_0\cM_\iota)^\perp$ (after extending scalars).

Another description of $\xi_\iota$ is as follows.
Given $c^j\in \iota(k)$ and $r_j\in \check{W}_0(\bZ)$ with $\sum_j c^j r_j\in (W_0\cM_\iota)^\perp$ we have
$$
\xi_\iota\left(\sum_j c^j r_j\right) = \left.\sum_j c^j \xi(r_j)\right|_{H^1_\iota}
$$
The next step will be to combine all the above statements, as $\iota$ ranges over all embeddings of $k$ into $\bR$.

\paragraph{{The twisted cycle map.}}
Since $\oplus_\iota H^1_\iota$ has real multiplication by $k$, we have an order $\cO\subseteq k$ which acts by endomorphisms on the bundle of abelian varieties $\Jac_\bZ(\oplus_\iota H^1_\iota)$.
Inside $(W_0\cM_{\iota_0})^\perp$ which is a local system over $\iota_0(k)$, we can choose an $\cO$-lattice, i.e. the $\iota_0(\cO)$-submodule
\begin{align}
\label{eqn:Lambda_iota_def}
\Lambda_{\iota_0}:= \left(\check{W}_0(\bZ)\otimes_{\bZ}\iota_0(\cO) \right)\cap (W_0 \cM_{\iota_0})^\perp
\end{align}
Note that $(W_0\cM_{\iota_0})^\perp\subset \check{W}_0$, since this is the annihilator of $W_0\cM\subset W_0$.
After extending scalars to $\bQ$ we have an isomorphism
$$
\Lambda_{\iota_0}\otimes_\bZ \bQ \toisom (W_0 \cM_{\iota_0})^\perp
$$

\begin{definition}
\label{def:twisted_cycle}
Recall that for $a\in k$ we denote by $\rho(a)$ the corresponding endomorphism of the family of Jacobians and the map $\xi$ was defined in \autoref{eqn:def_xi}.
For $c^j\in \cO$ and $r_j\in \check{W}_0(\bZ)$ we can then define a twisted cycle map
\begin{align*}
 \nu:\Lambda_{\iota_0}&\to \Jac_\bZ\left(\bigoplus_\iota H^1_\iota\right)\\
 \sum_j c^j r_j &\mapsto \sum_j \rho(c^j) \xi(r_j)
\end{align*}
\end{definition}

The following is the main theorem of this section.
It is a generalization of Theorem 3.3 of M\"{o}ller from \cite{Moller_torsion}.

\begin{theorem}
\label{thm:torsion}
 The image of $\nu$ is torsion in the Jacobian.
\end{theorem}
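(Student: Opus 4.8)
The plan is to combine the pointwise splitting statements of \autoref{thm:iota_l_splitting} for \emph{all} embeddings $\iota$ simultaneously, and then use the rigidity of torsion in the family of abelian varieties to conclude. The key observation is that the map $\nu$ takes values in $\Jac_\bZ(\oplus_\iota H^1_\iota)$, which is (a factor of) an honest abelian variety, whereas $\xi_\iota$ takes values in $\Jac_{\iota(k)}H^1_\iota$, which is a $k$-vector space with no torsion phenomenon; the torsion statement must therefore be extracted by comparing the integral and the $k$-linear pictures.

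\medskip

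First I would fix an element $\lambda=\sum_j c^j r_j\in\Lambda_{\iota_0}$ with $c^j\in\cO$ and $r_j\in\check W_0(\bZ)$, and consider its image $\nu(\lambda)=\sum_j\rho(c^j)\xi(r_j)$ in $\Jac_\bZ(\oplus_\iota H^1_\iota)$. Since the decomposition $\oplus_\iota H^1_\iota$ is $\bQ$-rational and $\Jac_\bQ(\oplus_\iota H^1_\iota)=\Jac_\bZ(\oplus_\iota H^1_\iota)/\langle\text{torsion}\rangle$, to show $\nu(\lambda)$ is torsion it suffices to show its image vanishes in $\Jac_\bQ(\oplus_\iota H^1_\iota)$, and this in turn can be checked after extending scalars and projecting onto each factor $\Jac_{K}H^1_\iota$ (with $K$ the normal closure), since $\bigoplus_\iota \Jac_K H^1_\iota$ receives $\Jac_\bQ(\oplus H^1_\iota)\otimes_\bQ K$ injectively. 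The point is that the Galois group $G$ permutes the embeddings, and an element of $\Jac_\bZ(\oplus_\iota H^1_\iota)$ that is $\bQ$-rational and maps to $0$ in every $\Jac_K H^1_\iota$ must be zero modulo torsion. So the reduction is: show that for each embedding $\iota$, the image of $\nu(\lambda)$ in $\Jac_{K}H^1_\iota$ vanishes.

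\medskip

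Next I would identify, for a fixed $\iota$, the image of $\nu(\lambda)$ in $\Jac_{\iota(k)}H^1_\iota$ with $\xi_\iota$ evaluated on a suitable element of $(W_0\cM_\iota)^\perp$. Concretely, $\rho(c^j)$ acts on the factor $H^1_\iota$ by the scalar $\iota(c^j)$, so the image of $\nu(\lambda)$ in $\Jac_{\iota(k)}H^1_\iota$ equals $\sum_j\iota(c^j)\,\xi(r_j)|_{H^1_\iota}=\xi_\iota\!\left(\sum_j\iota(c^j)\,r_j\right)$, using the alternative description of $\xi_\iota$ recorded just before \autoref{def:twisted_cycle}. Now the element $\sum_j\iota(c^j)r_j$ is precisely the image of $\lambda\in\Lambda_{\iota_0}$ under the Galois isomorphism $\check W_0\otimes\iota_0(k)\xrightarrow{\sim}\check W_0\otimes\iota(k)$ carrying $T\cM_{\iota_0}$ to $T\cM_\iota$, hence carrying $(W_0\cM_{\iota_0})^\perp\otimes\bQ$ into $(W_0\cM_\iota)^\perp$; therefore $\sum_j\iota(c^j)r_j$ lies in $(W_0\cM_\iota)^\perp(\iota(k))$. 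By \autoref{thm:iota_l_splitting}, applied with $l$ ranging over all embeddings, the sequence \eqref{eqn:ses_iota_l} is pointwise $\iota(k)$-split, which by the dictionary of \autoref{subsec:exts_fields} means exactly that $\xi_\iota$ is the zero map on $(W_0\cM_\iota)^\perp(\iota(k))$. Hence the image of $\nu(\lambda)$ in $\Jac_{\iota(k)}H^1_\iota$, and a fortiori in $\Jac_K H^1_\iota$, vanishes, for every $\iota$. Combining with the previous paragraph, $\nu(\lambda)$ is torsion.

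\medskip

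\textbf{The main obstacle} I anticipate is the bookkeeping in the middle step: one must be careful that the Galois-conjugate local systems $T\cM_\iota$ are defined so that the splitting of \eqref{eqn:ses_iota_l} for the embedding $\iota$ really does translate into the vanishing of $\xi_\iota$ on $(W_0\cM_\iota)^\perp$ for that \emph{same} $\iota$, and that the element $\lambda$, which a priori lives only in the $\iota_0$-piece $\Lambda_{\iota_0}$, has a well-defined Galois conjugate landing in $(W_0\cM_\iota)^\perp$ — this uses that $\Lambda_{\iota_0}$ is built from the $\bZ$-structure $\check W_0(\bZ)$, which is Galois-stable, tensored with $\iota_0(\cO)$, whose conjugate is $\iota(\cO)\subset\iota(k)$. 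The other point requiring care is the passage from "torsion in each $\Jac_{\iota(k)}H^1_\iota$-picture vanishes" back to "$\nu(\lambda)$ is torsion in the integral Jacobian", i.e. that $\bigoplus_\iota\Jac_K H^1_\iota$ sees all of $\Jac_\bQ(\oplus_\iota H^1_\iota)$ faithfully; this is where one invokes that the decomposition $H^1=\oplus_\iota H^1_\iota\oplus V$ is $\bQ$-rational so that $\Jac_\bZ(\oplus_\iota H^1_\iota)\otimes\bQ\hookrightarrow\bigoplus_\iota(\Jac_\bZ H^1_\iota\otimes\bQ)$ after base change to $K$, and kernels of this map are torsion. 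Everything else is the formal machinery of \autoref{subsec:exts_fields}.
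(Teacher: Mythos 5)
The first half of your argument — reducing to the vanishing of $\xi_\iota$ on $(W_0\cM_\iota)^\perp$ for each embedding $\iota$, via Galois conjugation and \autoref{thm:iota_l_splitting} — is correct and is essentially the same computation as in the paper's proof.

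The gap is in the final step, where you pass from ``$\nu(\lambda)$ vanishes in every $\Jac_K H^1_\iota$'' to ``$\nu(\lambda)$ is torsion in $\Jac_\bZ(\oplus_\iota H^1_\iota)$''. The kernel of the natural surjection $\Jac_\bZ(\oplus_\iota H^1_\iota)\to\Jac_K(\oplus_\iota H^1_\iota)$ is \emph{not} the torsion: it is the image of $\check{H}^1(K)$, which since $F^0\cap\check{H}^1(K)=0$ is isomorphic to $\check{H}^1(K)/\check{H}^1(\bZ)$, a countable but dense subgroup of the real torus containing far more than the torsion points. Equivalently, $\Jac_\bQ\to\Jac_K$ has kernel $\check{H}^1(K)/\check{H}^1(\bQ)\neq 0$ whenever $K\supsetneq\bQ$, so vanishing in $\Jac_K$ does not imply vanishing in $\Jac_\bQ$. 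Your assertion that ``$\bigoplus_\iota\Jac_K H^1_\iota$ receives $\Jac_\bQ(\oplus H^1_\iota)\otimes_\bQ K$ injectively'' does not help, because the map $\Jac_\bQ\to\Jac_K$ is not obtained by tensoring with $K$; and the caveat you flag at the end (``kernels of this map are torsion'') is precisely what fails.

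The paper closes this gap by a separate argument. Knowing that $\nu(\lambda)$ lands pointwise in the kernel of $\Jac_\bZ\to\Jac_K$, i.e.\ in the image of the flat $K$-local system $\check{H}^1(K)$, and that $\nu(\lambda)$ is a continuous (in fact holomorphic) section, it follows that $\nu(\lambda)$ is a locally constant, hence flat, section of the torus bundle. Flatness makes it invariant under the monodromy action. One then uses the irreducibility of the $\bQ$-local system $\oplus_\iota\check{H}^1_\iota$: if $v\in\check{H}^1(K)$ represents the flat value and $\gamma v-v\in\check{H}^1(\bZ)$ for all monodromy elements $\gamma$, then for each $\sigma\in\Gal(K/\bQ)$ the vector $v-v^\sigma$ is $\Gamma$-fixed (because $\check{H}^1(\bZ)$ is Galois-stable), hence zero by irreducibility, so $v\in\check{H}^1(\bQ)$ and $\nu(\lambda)$ is a rational, hence torsion, point. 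You would need to supply this monodromy/irreducibility argument to complete the proof.
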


\begin{proof}
 The proof is in two steps.
 First, we show that the image of $\nu$ is zero after we apply the quotient map (where $K$ is a normal closure of $k$)
 $$
 \Jac_\bZ\left(\bigoplus_\iota H^1_\iota\right) \onto \Jac_K\left(\bigoplus_\iota H^1_\iota\right)
 $$
 To finish, we prove that it must have been torsion to begin with.
 
 We have the following spaces and the relation between them which will appear in the proof.
 \begin{eqnarray}
  \begin{tikzcd}[baseline=(current  bounding  box.center), column sep=tiny]
     & \check{W}_0 \arrow{rrr}{\xi} & & & \Jac(\oplus_\iota H^1_\iota) \arrow{dl} \arrow{d} \arrow{dr} & \\
   (W_0 \cM_{\iota_0})^\perp \arrow[hook]{ur} & \cdots \arrow{u} & (W_0\cM_{\iota})^\perp \arrow[hook]{ul}
   &\Jac_{\iota_0(k)}(H^1_{\iota_0}) &\ldots & \Jac_{\iota(k)}(H^1_\iota)
   \label{eqn:diagram}
   \end{tikzcd}
 \end{eqnarray}
 To combine the splittings given by \autoref{thm:iota_l_splitting} we need to extend scalars to the field $K$ which contains all the $\iota(k)$.
 Consider the classifying map obtained from $\xi$
 $$
 \xi_K: \check{W}_0(K)\to \Jac_K\left(\bigoplus_\iota H^1_\iota\right)
 $$
 Given $x=\sum_j c^j r_j \in \Lambda_{\iota_0}$ the splittings from \autoref{thm:iota_l_splitting} say that for any $g\in \Gal(K/\bQ)$ we have
 $$
 \left.\xi_K(gx)\right|_{H^1_{g\iota_0}}=0
 $$
 Recall that to get the splitting we had to apply the Galois action both on $H^1$ and on $W_0$.
 
 Assuming $g\iota_0=\iota_l$ an explicit way to write the above vanishing is
 $$
 \left.\xi_K\left(\sum_j \iota_l(c^j)r_j\right)\right|_{H^1_{\iota_l}}=0
 $$
 Using the $K$-linearity of $\xi_K$ we find
 $$
 \left.\sum_j \iota_l(c^j) \xi_K(r_j)\right|_{H^1_{\iota_l}}=0
 $$
 Taking a direct sum over the different embeddings $\iota_l$ gives that $\nu(x)=0$ in $\Jac_K(\oplus_\iota H^1_\iota)$.
 To see this, recall that the real multiplication action of $k$ on the factor $H^1_\iota$ was by the embedding $\iota$.
 
 To finish, we want to conclude $\nu(x)$ is torsion in the $\bZ$-Jacobian.
 We know that $\nu(x)$ defines a holomorphic section of the corresponding family of abelian varieties.
 We just proved its image lies in the (group-theoretic) kernel of the map
 \begin{align}
 \label{eqn:ker_Jac_map}
 \Jac_\bZ\left(\bigoplus_\iota H^1_\iota\right)\onto \Jac_K \left(\bigoplus_\iota H^1_\iota\right)
 \end{align}
 Viewing the bundle of Jacobians as a flat family of real tori (flat in the sense of the Gauss-Manin connection) we conclude that $\nu(x)$ is itself flat.
 For this, the continuity of $\nu$ would be sufficient.
 
 However, the monodromy acts by parallel transport on a fixed fiber of this family of tori and $\nu(x)$ is invariant by this action.
 Since the underlying local system over $\bQ$ is irreducible (the monodromy acts irreducibly) we conclude that $\nu(x)$ must be among the rational points.
 
 This implies that $\nu(x)$ is a torsion section of the bundle of Jacobians.
\end{proof}

\begin{remark}
\label{rmk:splittings}
 The splitting over $\iota(k)$ of the mixed Hodge structure obtained in \autoref{thm:iota_l_splitting} is equivalent to the composition (see diagram \eqref{eqn:diagram})
 \begin{align}
 \label{eqn:long_composition}
 (W_0\cM_\iota)^\perp \to \check{W_0} \xrightarrow{\xi} \Jac(\oplus_\iota H^1_\iota) \to \Jac_{\iota(k)}(H^1_\iota)
 \end{align}
 being the zero map.
 The torsion condition obtained in \autoref{thm:torsion} implies that the twisted cycle map $\nu:\Lambda_{\iota_0}\to \Jac_\bZ(\oplus_\iota H^1_\iota)$ is torsion.
 In particular, $A\cdot \nu$ is the zero map for some integer $A\neq 0$.
 Note that the twisted cycle map $\nu$ involves the maps in \eqref{eqn:long_composition} ranging over all embeddings $\iota$.
 
 \noindent
 The condition that $A\cdot \nu$ is the zero map implies, in particular, that the composition defined in \eqref{eqn:long_composition} is also the zero map, as $\iota$ ranges over all embeddings.
 Indeed, these maps are the components of $\nu$ after an extension of scalars to the appropriate number field.

 \noindent Thus, the torsion condition in \autoref{thm:torsion} implies the splitting property in \autoref{thm:iota_l_splitting}.
\end{remark}

\subsection{Algebraicity in the general case}

\paragraph{Setup.} We keep the notation from the previous section.
$\cM$ denotes an affine invariant manifold, $T\cM_{\iota_0}$ its tangent bundle and $W_0\cM_{\iota_0}$ its intersection with the purely relative subbundle.
We have the decomposition of the Hodge bundle
$$
H^1=\left(\bigoplus_\iota H^1_\iota\right)\oplus V
$$
Recall also that $\cM$ lives in a stratum $\cH$ and we have the tautological section
$$
\omega:\cH\to T\cH
$$
This section is algebraic and even defined over $\bQ$ (see \autoref{rmk:field_defn}).
Combined with the Gauss-Manin connection it gives the local flat coordinates on the stratum $\cH$.

\begin{theorem}
\label{thm:algebraic}
The affine invariant manifold $\cM$ is a quasi-projective algebraic subvariety of $\cH$, defined over $\conj{\bQ}$.
\end{theorem}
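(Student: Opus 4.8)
The plan is to combine the Torsion Theorem (\autoref{thm:torsion}) with the special-case algebraicity argument (\autoref{prop:alg_special_case}), using the torsion statement to pin down the precise location of $T\cM$ inside relative cohomology. Here is the structure. By \autoref{rmk:self_inters} it suffices to work locally on the self-intersection-free open part $\cM_0$, arguing by induction on dimension with $\cM'$ already known to be quasi-projective. Fix $(X_0,\omega_0)\in\cM_0$ and a neighborhood $N_\epsilon$ with period coordinates $\Pi$ as in \autoref{eqn:per_coord}. The subtlety over \autoref{prop:alg_special_case} is that $T\cM$ is no longer all of $p^{-1}(H^1_{\iota_0})$: it is cut out inside $p^{-1}(H^1_{\iota_0})$ by the extra linear equations defining $W_0\cM_{\iota_0}\subset W_0$, equivalently by the annihilator conditions coming from $\Lambda_{\iota_0}$.

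First I would reinterpret \autoref{thm:torsion} as an algebraic condition on the period point. An element $\lambda=\sum_j c^j r_j\in\Lambda_{\iota_0}$ pairs with the relative period vector of each eigenform $\omega_l\in H^{1,0}_{\iota_l}$ to give $\sum_j\iota_l(c^j)\int_{r_j'}\omega_l$; the content of \autoref{thm:torsion} (spelled out in the Remark after \autoref{def:twisted_cycle}) is that this quantity, modulo absolute periods, is torsion in $\Jac_{\bZ}$, hence after multiplying by a fixed integer $A\neq 0$ is an integral linear combination of absolute periods $\int_{a_i}\omega_l,\int_{b_i}\omega_l$. This is a $\conj\bQ$-algebraic constraint on the image of the period map into the Hodge bundle $\bP(H^{1,0})$ over $\cA_g$, because it is a $\bZ$-linear (or $\cO$-linear) relation among period integrals with coefficients in $\conj\bQ$. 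Concretely: the twisted cycle map $\nu$, being torsion of fixed order dividing $A$, determines a $\conj\bQ$-algebraic subvariety $\cP$ of the eigenform locus $\cN'$ from the proof of \autoref{prop:alg_special_case}, namely the locus where $A\cdot\nu\equiv 0$ and $\rho(a)$ acts with the prescribed Hodge type and $\omega$ as $\iota_0$-eigenform, with the fixed $\cO$-module structure.

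Next I would show $\cM$ locally coincides with (a component of) $\cP$. The inclusion $\cM\subseteq\cP$ is exactly \autoref{thm:torsion} together with \cite{ssimple} giving the real multiplication, as in \autoref{prop:alg_special_case}. For the reverse inclusion, I would argue as in that proof that the conditions defining $\cP$ in $N_\epsilon$ force the period point into $p^{-1}(H^1_{\iota_0})$ (from the eigenform condition $\rho(a)\omega=\iota_0(a)\omega$) and, within that, into the subspace cut out by the annihilator equations — precisely $T\cM_{(X_0,\omega_0)}$ — because the relation $A\cdot\nu\equiv 0$ is equivalent (Remark after \autoref{thm:torsion}) to the mixed Hodge structure splittings that characterize $W_0\cM_{\iota_0}$ as the kernel of $p|_{T\cM}$; here one uses that $\Lambda_{\iota_0}\otimes\bQ\toisom(W_0\cM_{\iota_0})^\perp$, so the $\Lambda_{\iota_0}$-torsion conditions literally carve out $(W_0\cM_{\iota_0})^\perp$, hence its annihilator $T\cM$ in relative cohomology. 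Thus $\Pi(\cM\cap N_\epsilon)$ and $\Pi(\cP\cap N_\epsilon)$ are both open subsets of the same affine subspace $T\cM_{(X_0,\omega_0)}$, so the component $\cN$ of $\cP$ through $(X_0,\omega_0)$ satisfies $\cM\subseteq\cN\subseteq\cM$ locally. Finally, finiteness of such components (via Borel--Harish-Chandra, as in \autoref{rmk:finiteness_in_A_g}, now applied also to the lattice $\Lambda_{\iota_0}$ and the finitely many torsion sections of order dividing $A$) and the induction of \autoref{rmk:self_inters} upgrade this to: $\cM$ is quasi-projective in $\cH$, defined over $\conj\bQ$.

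The main obstacle I expect is making precise that the torsion-of-$\nu$ condition is genuinely an algebraic condition of the right codimension — i.e. that it cuts $\cN'$ down to exactly $T\cM$ and nothing larger. This requires knowing that $(W_0\cM_{\iota_0})^\perp$ is spanned (over $\bQ$) by $\Lambda_{\iota_0}$ so that imposing torsion on all of $\Lambda_{\iota_0}$ is equivalent to imposing the full system of linear equations defining $T\cM$; this is exactly the content of the isomorphism after \autoref{eqn:Lambda_iota_def} together with the Remark identifying the splitting with the vanishing of the composition in \autoref{eqn:long_composition}. The second delicate point is the descent of the field of definition to $\conj\bQ$: one must check that the torsion sections, the eigenform conditions, and the $\cO$-structure are all defined over $\conj\bQ$ — the tautological section $\omega$ is defined over $\bQ$ (\autoref{rmk:field_defn}), the decomposition \autoref{eqn:H1_decomp} is defined over the number field $k$ by \cite{Wright_field}, and torsion points of abelian schemes over $\conj\bQ$-varieties are defined over $\conj\bQ$, so the resulting component $\cN$ inherits a $\conj\bQ$-structure.
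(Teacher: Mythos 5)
Your proposal follows the paper's strategy closely --- real multiplication locus, torsion condition, comparison of period coordinates, and finiteness of components via Borel--Harish-Chandra. There is, however, one substantive gap, and it is exactly the point the paper handles with an extra step that you have collapsed.

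You define $\cP$ by the conditions: real multiplication of the right $\cO$-type, $\omega$ an $\iota_0$-eigenform, and $A\cdot\nu\equiv 0$, and then assert that $\Pi(\cP\cap N_\epsilon)$ lands in $T\cM_{(X_0,\omega_0)}$ ``because the relation $A\cdot\nu\equiv 0$ is equivalent to the mixed Hodge structure splittings that characterize $W_0\cM_{\iota_0}$.'' This is too fast. The torsion condition is a constraint on the \emph{mixed Hodge structure} of the pair $(X, (\omega)_{\mathrm{red}})$: when it holds, the sequence $0\to W_0/W_0\cM_{\iota_0}\to p^{-1}(H^1_{\iota_0})/W_0\cM_{\iota_0}\to H^1_{\iota_0}\to 0$ splits and yields a unique splitting sub-local system $T'$. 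But nothing in the torsion condition by itself forces the \emph{tautological class} $\omega$ (equivalently, the period point $\Pi(X,\omega)$, which is $\omega$ parallel-transported to the basepoint) to lie in the flat extension of $T'$. The eigenform condition only controls $p(\omega)\in H^1_{\iota_0}$; the torsion condition pins down $T'$ as a sub-bundle; the further requirement that $\omega$ sit inside $T'$ (equivalently, inside the preimage of $T'$ in $T\cH$) is a genuine additional linear condition in period coordinates, and it is precisely this condition that cuts the locus down to $T\cM_{(X_0,\omega_0)}$. The paper makes this explicit by inserting a third locus $\cN'''\subseteq\cN''$ defined by $\omega\in T'$, and it is $\cN'''$ (not the torsion locus $\cN''$) whose irreducible component through $(X_0,\omega_0)$ is identified with $\cM$ locally.

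You actually anticipate this in your ``main obstacle'' paragraph --- you worry that the torsion condition might not cut $\cN'$ down to exactly $T\cM$ --- but your proposed resolution (that $\Lambda_{\iota_0}\otimes\bQ\cong(W_0\cM_{\iota_0})^\perp$ makes the torsion conditions ``literally carve out $T\cM$'') does not close the gap: identifying $(W_0\cM_{\iota_0})^\perp$ and hence the splitting bundle $T'$ tells you \emph{which subspace} $\omega$ should lie in, but you must still impose membership of $\omega$ in that subspace as an algebraic condition. The fix is to add one more step: over the torsion locus, let $T'$ be the (unique, hence algebraic, hence flat) splitting sub-bundle, and define the final locus by the algebraic condition that the tautological section $\omega$ belongs to the preimage of $T'$ in $T\cH$. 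With that correction the rest of your argument, including the $\conj\bQ$-descent discussion, matches the paper's proof.
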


\begin{proof}
The proof is similar to that of \autoref{prop:alg_special_case}.
In particular, as per \autoref{rmk:self_inters} it is carried out in the open subset of $\cM$ where there are no self-intersections.
We first define an algebraic variety $\cN'''$ which has the same properties as $\cM$ and contains it. 
Then we check that one of its irreducible components is contained in $\cM$ and thus has to coincide with it.

First, let $\cN'\subseteq \cH$ be the locus where $(X,\omega)$ admits real multiplication of the same type as on $\cM$, with $\omega$ as an eigenform.
On $\cN'$ we also have the map $\nu$ described in \autoref{def:twisted_cycle}
$$
\nu:\Lambda_{\iota_0} \to \Jac_\bZ\left(\bigoplus_\iota H^1_{\iota}\right)
$$
By \autoref{thm:torsion}, over $\cM$ the map lands in the torsion so there exists $A\in \bN$ such that
$A\cdot\nu\equiv 0$ everywhere on $\cM$.
Let $\cN''\subseteq \cN'$ be the sublocus where the equation $A\nu=0$ holds on $\cN'$.
This is again algebraic, defined over $\conj{\bQ}$, and contains $\cM$.

Recall now that we are working in a finite cover where the local system $W_0$ is trivial and so $W_0\cM_{\iota_0}$ is globally defined.
Over $\cN''$ the condition $A\cdot \nu=0$ implies (see \autoref{rmk:splittings}) that the exact sequence of mixed Hodge structures splits (over $\iota_0(k)$)
$$
0\to W_0/W_0\cM_{\iota_0} \to p^{-1}(H^1_{\iota_0})/W_0\cM_{\iota_0}\to H^1_{\iota_0} \to 0
$$
The splitting subbundle in $p^{-1}(H^1_{\iota_0})/W_0 \cM_{\iota_0}$ is unique and itself algebraic.
Denote this bundle by $T'$.
It also gives a local system and it is the candidate for the tangent space.

Recall we had the tautological section $\omega:\cH\to T\cH$ and let $\cN'''\subseteq\cN''$ be the locus where $\omega\in T'$.
This is an algebraic variety over $\conj{\bQ}$ by construction, and we claim $\cM$ coincides with the irreducible component of $\cN'''$ containing it.

To see this, first note that $T'=T\cM$ on $\cM$.
However, locally near a point of $\cM$ the condition $\omega\in T'$ is the same as being on $\cM$.
Indeed, requiring $\omega$ to lie in some local subsystem is the same as requiring the flat surface to be in a linear subspace in local period coordinates (see \autoref{rmk:loc_syst_per_coord}).

Finally, note that $T'$ is a subquotient of $T\cH$, but the condition $\omega\in T'$ still makes sense.
It is understood as $\omega$ belonging to the preimage of $T'$ in $T\cH$.
This completes the proof of algebraicity.
\end{proof}

\begin{remark}
\label{rmk:affine_mnfds_dimension}
 Let us explain how affine manifolds are distinguished among loci of real multiplication, since usually these are not $\GL_2^+\bR$-invariant.
 
 Suppose given a subvariety $\cN'\subset \cH$ parametrizing $(X,\omega)$ admitting real multiplication by $k$ with $\omega$ an eigenform.
 Given a $k$-local system $W_0T\subset W_0$ (thinking of it as $W_0\cM_{\iota_0}$), further require that the quotient mixed Hodge structure splits as in the above theorem.
 Let the splitting be given by a bundle (and also local system) denoted $T$.
 Finally, require that $\omega$ lies in $T$.
 
 This provides us with a locus $\cN\subseteq \cH$.
 Note that the requirement $\omega \in T$ implies that the Zariski tangent bundle of $\cN$, denoted $T\cN$, is contained in $T$.
 The variety $\cN$ will also be $\SL_2\bR$-invariant if and only if we have the equality $T\cN=T$.
\end{remark}

\begin{remark}[\textbf{On fields of algebraic definition}]
 \label{rmk:field_defn}\leavevmode
 \begin{enumerate}
 \item[(i)] For the purposes of this discussion, a variety is ``defined over a field $K$'' if it can be described in a projective space as the zero locus of polynomials with coefficients in $K$.
 It is quasi-projective if it can be described in a projective space as the locus where a given collection of polynomials vanish, and another collection doesn't vanish.
 A map between varieties (in particular a section of a bundle) is ``defined over $K$'' if it can be described using polynomials with coefficients in $K$.
 
 \item[(ii)] A stratum $\cH$ is a (quasi-projective) algebraic variety defined over $\bQ$.
 To see this, recall that the moduli space of genus $g$ Riemann surfaces $\cM_g$ is defined over $\bQ$, and so is the Hodge bundle $\cH_g\xrightarrow{\pi}\cM_g$.
 Moreover, the pullback of the Hodge bundle $\pi^*\cH_g$ to $\cH_g$ has the tautological section $\omega:\cH_g\to \pi^*\cH_g$, also defined over $\bQ$.
 Next, letting $\cC_g\to \cM_g$ be the universal bundle of Riemann surfaces, the Hodge bundle admits a map $Div:\cH_g\to \Sym^{2g-2}_{\cM_g}\cC_g$ which takes a $1$-form to its zero locus.
 The space $\Sym^{2g-2}_{\cM_g}\cC_g$ parametrizes $2g-2$-tuples of (not necessarily distinct) points on the same fiber of the universal family.
 This space is defined over $\bQ$ and has a stratification, also defined over $\bQ$, depending on multiplicities.
 A stratum of $\cH_g$ is then the preimage of one of the strata on $\Sym^{2g-2}_{\cM_g}\cC_g$.
 
 The variety thus obtained need not be connected.
 To distinguish connected components, one might apriori need to extend the base field $\bQ$.
 However, from the classification of connected components due to Kontsevich \& Zorich \cite{KZ_components}, each connected component can be described by an algebraic condition also defined over $\bQ$.
 Indeed, the spin invariant of a square root of the canonical bundle is invariant by Galois conjugation, and so is the property of being hyperelliptic.
 
 \item[(iii)] The tautological section $\omega$ of the Hodge bundle restricted to a stratum is then also defined over $\bQ$.
 Moreover, the stratum carries a universal family of Riemann surfaces, and also the canonical set of marked points corresponding to the zeros of $\omega$.
 The vector bundle $H^1_{rel}$ has a description as the algebraic de Rham cohomology of this family of Riemann surfaces, and is thus itself defined over $\bQ$.
 The natural map from the Hodge bundle $\pi^*\cH_g$ to $H^1_{rel}$ is also defined over $\bQ$, and so $\omega:\cH\to H^1_{rel}\cong T\cH$ is defined over $\bQ$.
 See also \cite[Section 2]{Moller_Linear_mnfds} for a detailed discussion of these constructions.
 
 \item[(iv)] The proof of \autoref{thm:algebraic} involved the locus $\cN'$ where a factor of the Jacobian had real multiplication by $\cO$.
 In $\cA_g$, this locus is defined over $\bQ$, but to select the components which have the $\cO$-module structure on $H^1$ as on $\cM$, we need to pass to some finite extension of $\bQ$.
 Hence, the locus $\cN'$ is defined over $\overline{\bQ}$.
 Next, to define the locus $\cN''$ by imposing the torsion condition $A\cdot \nu \equiv 0$ required a finite cover where the zeros are labeled.
 This requires another finite extension of the base field.
 
 \item[(v)] An intersection of two varieties (e.g. imposing the condition that $\omega$ lies in a subbundle) is defined over a field which contains the defining fields for both varieties.
 \end{enumerate}
\end{remark}

 The above discussion explains why affine invariant manifolds are quasi-projective varieties defined over $\conj{\bQ}$.
 Acting by the Galois group of $\bQ$ on the defining equations inside $\cH$ produces new quasi-projective varieties.
 These will also be affine invariant manifolds.
 
\begin{corollary}
 The group $\Gal(\bQ)$ acts on the set of affine invariant manifolds.
\end{corollary}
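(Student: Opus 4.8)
The plan is to run the construction in the proof of \autoref{thm:algebraic} equivariantly for the Galois action. First I would record the formal framework: the stratum $\cH$ is defined over $\bQ$, and so are the tautological section $\omega\colon\cH\to H^1_{rel}$, the identification $H^1_{rel}\cong T\cH$, and the Gauss--Manin connection $\nabla^{GM}$ (see \autoref{rmk:field_defn}). Hence $\Gal(\bQ):=\Gal(\conj{\bQ}/\bQ)$ acts on the set of quasi-projective $\conj{\bQ}$-subvarieties of $\cH$, and by \autoref{thm:algebraic} every affine invariant manifold determines such a subvariety. (As throughout we may pass to the finite cover of $\cH$ on which the zeros of the $1$-form are labeled, the statement being insensitive to finite covers, and we attach to $\cM$ the Zariski closure of the smooth open subset $\cM_0$ of \autoref{rmk:self_inters}.) It then suffices to check that for $\sigma\in\Gal(\bQ)$ the conjugate variety $\cM^\sigma$ is again an affine invariant manifold.

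Next I would recall that the proof of \autoref{thm:algebraic} exhibits $\cM$ as an irreducible component of a $\conj{\bQ}$-subvariety $\cN'''\subseteq\cH$ built in three stages: the real multiplication locus $\cN'$ of a prescribed type (the order $\cO$, the $\cO$-module structure on the relevant lattice, and $\omega$ an eigenform); the sublocus $\cN''\subseteq\cN'$ cut out by the torsion relation $A\cdot\nu\equiv 0$ (see \autoref{def:twisted_cycle} and \autoref{thm:torsion}); and the sublocus $\cN'''\subseteq\cN''$ where $\omega\in T'$, with $T'\subseteq p^{-1}(H^1_{\iota_0})/W_0\cM_{\iota_0}$ the algebraic, $\nabla^{GM}$-flat splitting subbundle produced from the torsion relation via \autoref{rmk:splittings}. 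Since every ingredient here --- the stratum, the Hodge bundle, $\omega$, $H^1_{rel}\cong T\cH$, the connection $\nabla^{GM}$, and the endomorphisms $\rho(a)$ --- is defined over $\bQ$ by \autoref{rmk:field_defn}, and since the field $k$ is totally real so that all of its embeddings stay real after applying $\sigma$, the Galois conjugate $(\cN''')^\sigma$ is a locus of exactly the same shape, with $\cO$, its module structure, $A$, and $T'$ replaced by their $\sigma$-translates; moreover $\cM^\sigma$ is one of its irreducible components.

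Finally I would identify $\cM^\sigma$ using \autoref{rmk:affine_mnfds_dimension}, which characterizes affine invariant manifolds among such loci: a component of $(\cN''')^\sigma$ is $\SL_2\bR$-invariant precisely when its Zariski tangent bundle equals the prescribed subbundle $(T')^\sigma$. For $\cM^\sigma$ this equality holds, since on $\cM$ one has $T\cM=T'$ (noted in the proof of \autoref{thm:algebraic}) and applying $\sigma$ carries $T\cM=T'$ to $T\cM^\sigma=(T')^\sigma$. Being then $\SL_2\bR$-invariant, with tangent bundle the $\nabla^{GM}$-flat subbundle $(T')^\sigma$ of which $\omega$ is a section, $\cM^\sigma$ is linear in period coordinates and passes through the origin there, hence is invariant under all of $\GL_2^+(\bR)$; so $\cM^\sigma$ is an affine invariant manifold, possibly disconnected or with self-intersections in the sense of \autoref{rmk:self_inters}. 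This produces the desired $\Gal(\bQ)$-action.

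I expect the only real point to be the bookkeeping of the second paragraph: one must verify that \emph{every} object entering the construction of $\cN'''$ --- above all the Gauss--Manin connection and the tautological section, in which the ``linear in period coordinates'' condition is encoded --- is defined over $\bQ$, so that $\sigma$ transports the whole package to a package of the same type, and that the distinguished real embedding is tracked correctly (here one uses that $k$ is totally real, by \cite{Wright_field}). This is exactly the information assembled in \autoref{rmk:field_defn}; once it is granted, the characterization in \autoref{rmk:affine_mnfds_dimension} forces $\cM^\sigma$ into the class of affine invariant manifolds.
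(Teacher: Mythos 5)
Your argument is correct and follows essentially the same route as the paper: it exhibits affine invariant manifolds as components of loci cut out by algebraic conditions (real multiplication, the torsion relation $A\cdot\nu\equiv 0$, and the tangent/dimension condition of \autoref{rmk:affine_mnfds_dimension}) that are all defined over $\conj{\bQ}$, and observes that each condition is carried by $\sigma\in\Gal(\bQ)$ to a condition of the same type, using that $\omega$ and the Gauss--Manin connection are defined over $\bQ$ and that $k$ is totally real. The only cosmetic difference is that you phrase the last step via the equality $T\cM = T'$ of tangent bundles rather than via the dimension bound in the paper's third bullet, but these are equivalent by \autoref{rmk:affine_mnfds_dimension}.
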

\begin{proof}
 From the proof of \autoref{thm:algebraic} (see also the discussion in \autoref{rmk:affine_mnfds_dimension}) an affine manifold $\cM$ is defined by the following set of algebraic conditions.
 \begin{itemize}
  \item The parametrized Riemann surfaces have a factor with real multiplication by $\cO$, and $\omega$ as an eigenform.
  \item There is a sublocal system $\Lambda_{\iota_0}\subset \check{W}_0$ such that its image under the twisted Abel-Jacobi map is torsion of some fixed degree, i.e. $A\cdot \nu\equiv 0$ for some integer $A\neq 0$.
  \item There is an upper bound for the dimension of this locus inside the stratum $\cH$ (computed from the ranks of the objects above) and the dimension of $\cM$ achieves this bound.
 \end{itemize}
 Each of the above conditions persists when the Galois group of $\bQ$ acts on the defining equations.
 Note that $\omega$ is defined with $\bQ$-coefficients, and so if it is an eigenform on one locus, it will also be an eigenform in the Galois-conjugate locus. 
\end{proof}

\begin{remark}
 The results of Wright \cite{Wright_field} show that in local period coordinates on $H^1_{rel}$ an affine manifold is described by linear equations with coefficients in the number field $k$.
 These equations and field of definition are usually not related to the algebraic equations and respective fields.
 In particular, acting by the Galois group on the linear equations would typically not produce another affine manifold.
 
 For comparison, the field of affine definition of square-tiled surfaces is $\bQ$.
 However, algebraically these can be quite rich -- M\"{o}ller \cite[Thm. 5.4]{Moller_GT} showed the action of the Galois group is faithful on the corresponding \Teichmuller curves.
\end{remark}



\bibliographystyle{sfilip}
\bibliography{bibliography_splitting_mhs_affine_mnfds}

\end{document}